\renewcommand*{\@fnsymbol}[1]{\ensuremath{\ifcase#1\or *\or \dagger\or \ddagger\or
   \mathsection\or \mathparagraph\or \|\or **\or \dagger\dagger
   \or \ddagger\ddagger \else\@ctrerr\fi}}
\newif\ifrevised
\renewcommand{\underbar}[1]{\mkern 1.5mu\underline{\mkern-1.5mu#1\mkern-1.5mu}\mkern 1.5mu}
\providecommand*{\N}[1]{\left\|{#1}\right\|} 
\providecommand*{\Nnormal}[1]{\|{#1}\|} 
\DeclarePairedDelimiter\abs{\lvert}{\rvert}%
\renewcommand{\abs}[1]{\left|{#1}\right|}
\newcommand{\divergence}{\textrm{div}}
\newcommand{\diag}{\textrm{diag}}
\DeclareMathOperator*{\relu}{ReLU}
\providecommand*{\Var}[1]{\operatorname{Var}\left({#1}\right)}   
\newcommand{\globmin}{v^*}
\newcommand{\minobj}{\underbar \CE}
\newcommand{\indivmeasure}[0]{\varrho} 
\newcommand{\empmeasurenoarg}[0]{\widehat\rho^N}
\newcommand{\empmeasure}[1]{\widehat\rho_{#1}^N}
\newcommand{\omegaa}[0]{\omega_{\alpha}}
\newcommand{\conspointnoarg}{v_{\alpha}}
\newcommand{\conspoint}[1]{v_{\alpha}({#1})}
\renewcommand{\Im}{\operatorname{Im}}             
\providecommand{\argmin}{\operatorname*{arg\,min}}  
\providecommand{\Id}{\mathrm{Id}}                     
\providecommand{\Supp}{\operatorname{supp}}                            
\providecommand{\supp}{\Supp}
\providecommand{\bbR}{\mathbb{R}}
\providecommand{\bbE}{\mathbb{E}}
\providecommand{\CC}{{\cal C}}
\providecommand{\CP}{{\cal P}}
\providecommand{\CE}{{\cal E}}
\providecommand{\CV}{{\cal V}}
\providecommand{\CN}{{\cal N}}
\begin{document}
\title{Convergence of Anisotropic Consensus-Based Optimization in Mean-Field Law}
\titlerunning{Convergence of Anisotropic Consensus-Based Optimization}

\def\doi#1{\href{https://doi.org/\detokenize{#1}}{\url{https://doi.org/\detokenize{#1}}}}

\author{
Massimo Fornasier\inst{1,2}\thanks{Email: \texttt{massimo.fornasier@ma.tum.de}}\orcidlink{0000-0001-6854-2445}
\and
Timo Klock\inst{3}\thanks{Email: \texttt{timo@simula.no}}\orcidlink{0000-0002-0122-3017} 
\and
Konstantin Riedl\inst{1}\thanks{Email: \texttt{konstantin.riedl@ma.tum.de}}$^{(\text{\Letter})}$\orcidlink{0000-0002-2206-4334}
}
\authorrunning{M.\@~Fornasier, T.\@~Klock, and K.\@~Riedl}

\institute{
	Technical University of Munich, Department of Mathematics, Munich, Germany \and
	Munich Data Science Institute, Munich, Germany \and
	Simula Research Laboratory, Department of Numerical Analysis and Scientific Computing, Oslo, Norway
	}

\maketitle
\begin{abstract}
In this paper we study anisotropic consensus-based optimization~(CBO), a population-based metaheuristic derivative-free optimization method capable of globally minimizing nonconvex and nonsmooth functions in high dimensions.
CBO is based on stochastic swarm intelligence, and inspired by consensus dynamics and opinion formation.
Compared to other metaheuristic algorithms like Particle Swarm Optimization, CBO is of a simpler nature and therefore more amenable to theoretical analysis.
By adapting a recently established proof technique, we show that anisotropic CBO converges globally with a dimension-independent rate for a rich class of objective functions under minimal assumptions on the initialization of the method.
Moreover, the proof technique reveals that CBO performs a convexification of the optimization problem as the number of particles goes to infinity, thus providing an insight into the internal CBO mechanisms responsible for the success of the method.
To motivate anisotropic CBO from a practical perspective, we further test the method on a complicated high-dimensional benchmark problem, which is well understood in the machine learning literature.

\keywords{High-Dimensional Global Optimization \and Metaheuristics \and Consensus-Based Optimization \and Mean-Field Limit \and Anisotropy}
\end{abstract}

\section{Introduction} \label{sec:introduction}
Several problems arising throughout all quantitative disciplines are concerned with the global unconstrained optimization of a problem-dependent objective function $\CE : \bbR^d\rightarrow \bbR$ and the search for the associated minimizing argument
\begin{equation*}
	\globmin = \argmin_{v\in \bbR^d}\CE(v),
\end{equation*}
which is assumed to exist and be unique in what follows.
Because of nowadays data deluge such optimization problems are usually high-dimensional.
In machine learning, for instance, one is interested in finding the optimal parameters of a neural network~(NN) to accomplish various tasks, such as clustering, classification, and regression.
The availability of huge amounts of training data for various real-world applications allows practitioners to work with models involving a large number of trainable parameters aiming for a high expressivity and accuracy of the trained model.
This makes the resulting optimization process a high-dimensional problem.
Since typical model architectures consist of many layers with a large amount of neurons, and include nonlinear and potentially nonsmooth activation functions, the training process is in general a high-dimensional nonconvex optimization problem and therefore a particularly hard task.

Metaheuristics have a long history as state-of-the-art methods when it comes to tackling hard optimization problems.
Inspired by self-organization and collective behavior in nature or human society, such as the swarming of flocks of birds or schools of fish~\cite{carrillo2010swarming}, or opinion formation~\cite{toscani2006opinion},
they orchestrate an interplay between locally confined procedures and global strategies, randomness and deterministic decisions, to ensure a robust search for the global minimizer.
Some prominent examples are Random Search~\cite{rastrigin1963convergence}, Evolutionary Programming~\cite{fogel2006evolutionary}, Genetic Algorithms~\cite{holland1992adaptation}, Ant Colony Optimization~\cite{dorigo2005ant}, Particle Swarm Optimization~\cite{kennedy1995particle} and Simulated Annealing~\cite{aarts1989simulated}.

CBO follows those guiding principles, but is of much simpler nature and more amenable to theoretical analysis.
The method uses $N$ particles $V^1,\ldots,V^N$, which are initialized independently according to some law~$\rho_0\in\CP(\bbR^d)$, to explore the domain and to form a global consensus about the minimizer~$\globmin$ as time passes.
For parameters $\alpha,\lambda,\sigma > 0$ the dynamics of each particle is given by
\begin{equation} \label{eq:dyn_micro}
	dV_t^i =
	-\lambda\left(V_t^i - \conspoint{\empmeasure{t}}\right)dt
	+\sigma D\!\left(V_t^i - \conspoint{\empmeasure{t}}\right) dB^i_t,
\end{equation}
where $\empmeasure{t}$ denotes the empirical measure of the particles.
The first term in~\eqref{eq:dyn_micro} is a drift term dragging the respective particle towards the momentaneous consensus point, a weighted average of the particles' positions, computed as
\begin{align*} 
	\conspoint{\empmeasure{t}}
	:= \int v \frac{\omegaa(v)}{\N{\omegaa}_{L_1(\empmeasure{t})}}\,d\empmeasure{t}(v),
	\quad
	\textrm{with}\quad \omegaa(v) := \exp(-\alpha \CE(v))
\end{align*}
and motivated by the fact that $\conspoint{\empmeasure{t}}\approx\argmin_{i=1,\dots,N}\CE(V_t^i)$ for $\alpha\gg1$ if the $\argmin$ is unique.
To feature the exploration of the energy landscape of~$\CE$, the second term in~\eqref{eq:dyn_micro} is a diffusion injecting randomness into the dynamics through independent standard Brownian motions $((B_t^i)_{t\geq 0})_{i=1,\dots,N}$.
The two commonly studied diffusion types are isotropic~\cite{pinnau2017consensus,carrillo2018analytical,fornasier2021consensus} and anisotropic~\cite{carrillo2019consensus} diffusion with
\begin{equation*} 
D\!\left(V_t^i-\conspoint{\empmeasure{t}}\right) =
\begin{cases}
	\N{V_t^i-\conspoint{\empmeasure{t}}}_2 \Id ,& \textrm{ for isotropic diffusion,}\\
	\diag\left(V_t^i-\conspoint{\empmeasure{t}}\right)\!,& \textrm{ for anisotropic diffusion},
\end{cases}
\end{equation*}
where $\Id\in\bbR^{d\times d}$ is the identity matrix and $\diag:\bbR^d\rightarrow\bbR^{d\times d}$ the operator mapping a vector onto a diagonal matrix with the vector as its diagonal.
The term's scaling encourages in particular particles far from $\conspoint{\empmeasure{t}}$ to explore larger regions.
The coordinate-dependent scaling of anisotropic diffusion has proven to be particularly beneficial for high-dimensional optimization problems by yielding dimension-independent convergence rates (see Figure~\ref{fig:Vindifferentd}) and therefore improving both computational complexity and success probability of the algorithm~\cite{carrillo2019consensus,fornasier2021anisotropic}.

A theoretical convergence analysis of the CBO dynamics is possible either on the microscopic level~\eqref{eq:dyn_micro} or by analyzing the macroscopic behavior of the particle density through a mean-field limit.
In the large particle limit a particle is not influenced by individual particles but only by the average behavior of all particles.
As shown in~\cite{huang2021MFLCBO}, the empirical random particle measure $\empmeasurenoarg$ converges in law to the deterministic particle density~$\rho\in\CC([0,T],\CP(\bbR^d))$, which weakly (see Definition~\ref{def:fokker_planck_anisotropic_weak_sense}) satisfies the non-linear Fokker-Planck equation
\begin{align} \label{eq:fokker_planck_anisotropic}
	\partial_t\rho_t
	= \lambda\divergence \big(\!\left(v - \conspoint{\rho_t}\right)\rho_t\big)
	+ \frac{\sigma^2}{2}\sum_{k=1}^d\partial_{kk}\left(D\!\left(v-\conspoint{\rho_t}\right)_{kk}^2\rho_t\right).
\end{align}
A quantitative analysis of the convergence rate remains, on non-compact domains, an open problem, see, e.g., \cite[Remark~2]{fornasier2021consensus}.
Analyzing a mean-field limit such as~\eqref{eq:fokker_planck_anisotropic} allows for establishing strong qualitative theoretical guarantees about CBO methods, paving the way to understand the internal mechanisms at play.

\subsubsection{Prior Arts.}
The original CBO work~\cite{pinnau2017consensus} proposes the dynamics~\eqref{eq:dyn_micro} with isotropic diffusion, which is analyzed in the mean-field sense in~\cite{carrillo2018analytical}.
Under a stringent well-preparedness condition about~$\rho_0$ and $\CC^2$ regularity of $\CE$ the authors show consensus formation of the particles at some $\widetilde{v}$ close to $\globmin$ by first establishing exponential decay of the variance~$\Var{\rho_t}$ and consecutively showing $\widetilde{v}\approx\globmin$ as a consequence of the Laplace principle~\cite{miller2006applied}.
This analysis is extended to the anisotropic case in~\cite{carrillo2019consensus}.

Motivated by the surprising phenomenon that, on average, individual particles of the CBO dynamics follow the gradient flow of $v\!\mapsto\!\N{v-\globmin}_2^2$, see~\cite[Figure~1b]{fornasier2021consensus}, the authors of~\cite{fornasier2021consensus} develop a novel proof technique for showing global convergence of isotropic CBO in mean-field law to $\globmin$ under minimal assumptions.
Following \cite[Definition~1]{fornasier2021consensus}, we speak of convergence in mean-field law to $\globmin$ for the interacting particle dynamics~\eqref{eq:dyn_micro}, if the solution~$\rho_t$ to the associated mean-field limit dynamics~\eqref{eq:fokker_planck_anisotropic} converges narrowly to the Dirac delta $\delta_{\globmin}$ at $\globmin$ for $t\rightarrow \infty$.
The proof is based on showing an exponential decay of the energy functional~\mbox{$\CV:\CP(\bbR^d)\!\rightarrow\!\bbR_{\geq0}$}, given by
\begin{equation} \label{eq:energy_functional}
	\CV(\rho_t) := \frac{1}{2} \int \N{v-\globmin}_2^2 d\rho_t(v).
\end{equation}
This simultaneously ensures consensus formation and convergence of $\rho_t$ to $\delta_{\globmin}$.

\subsubsection{Contribution. }
In view of the effectiveness and efficiency of CBO methods with anisotropic diffusion for high-dimensional optimization problems, a thorough understanding is of considerable interest.
\begin{figure}[!ht]
\centering
\subcaptionbox{\footnotesize\label{fig:objective1d} The Rastrigin function in one coordinate direction}{\includegraphics[height=4.6cm, width=0.36\textwidth, trim=120 228 124 244,clip]{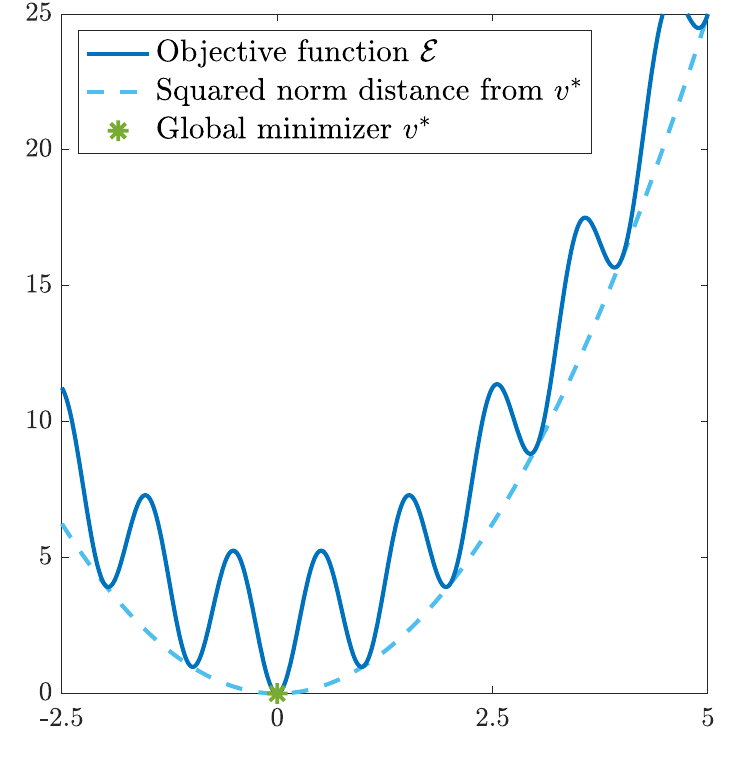}}
\hspace{3em}
\subcaptionbox{\label{fig:comparison} Evolution of $\CV(\empmeasure{t})$ for isotropic and anisotropic CBO for different dimensions}{\includegraphics[height=4.6cm, width=0.516\textwidth, trim=48 228 40 244,clip]{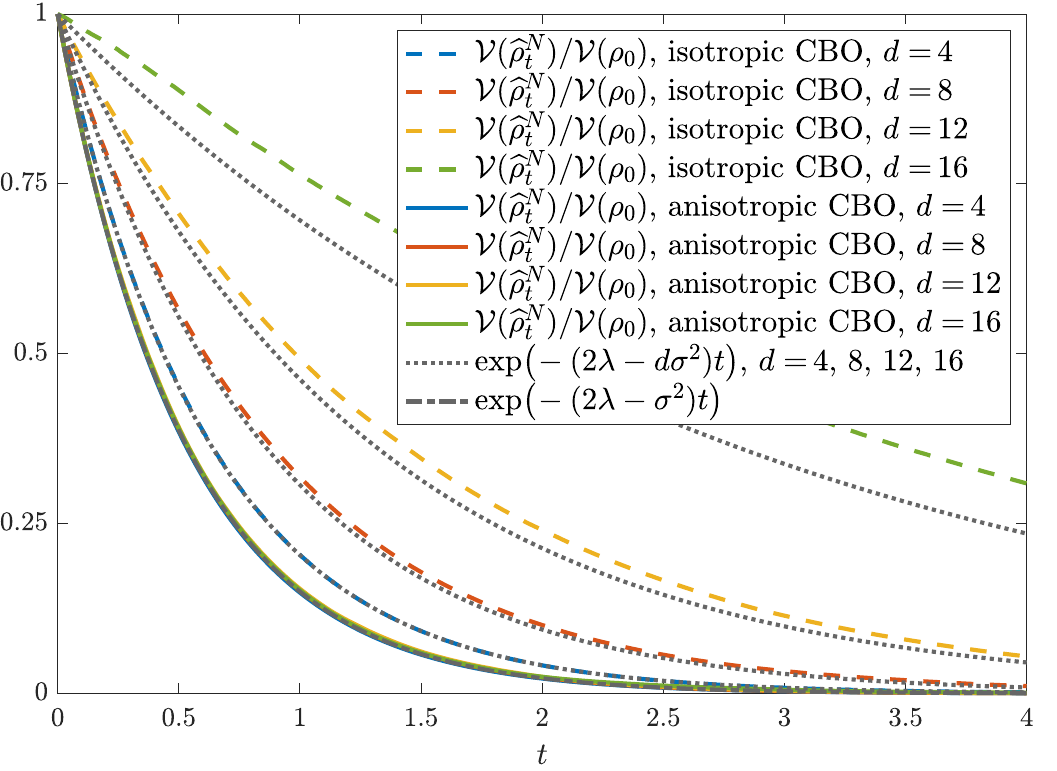}}
\caption{A demonstration of the benefit of using anisotropic diffusion in CBO.
For the Rastrigin function~$\CE(v)\!=\!\sum_{k=1}^d \!v_k^2\!+\!\frac{5}{2}(1\!-\!\cos(2\pi v_k))$ with~$\globmin \!=\! 0$ and spurious local minima (see {(a)}), we evolve the discretized system of isotropic and anisotropic CBO using $N = 320000$ particles, discrete time step size~$\Delta t = 0.01$ and \mbox{$\alpha = 10^{15}$}, $\lambda = 1$, and $\sigma = 0.32$ for different dimensions~$d\in\{4,8,12,16\}$.\\
We observe in {(b)} that the convergence rate of the energy functional~$\CV(\empmeasure{t})$ for isotropic CBO (dashed lines) is affected by the ambient dimension~$d$, whereas anisotropic CBO (solid lines) converges independently from $d$ with rate $(2\lambda-\sigma^2)$.} \vspace{-1em}
\label{fig:Vindifferentd}
\end{figure}
As we illustrate in Figure~\ref{fig:Vindifferentd}, anisotropic CBO~\cite{carrillo2019consensus} converges with a dimension-independent rate as opposed to isotropic CBO~\cite{pinnau2017consensus,carrillo2018analytical,fornasier2021consensus}, 
making it a particularly interesting choice for problems in high-dimensional spaces, e.g., from signal processing and machine learning applications.
In this work we extend the analysis of \cite{fornasier2021consensus} from isotropic CBO to CBO with anisotropic diffusion.
More precisely, we show global convergence of the anisotropic CBO dynamics in mean-field law to the global minimizer~$\globmin$ under minimal assumptions about the initial measure~$\rho_0$ and for a rich class of objectives~$\CE$.
Furthermore, utilizing some tweaks in the implementation of anisotropic CBO, such as a random mini-batch idea and a cooling strategy of the parameters as proposed in~\cite{carrillo2019consensus,fornasier2020consensus_sphere_convergence},
we show that CBO performs well, in fact, almost on par with state-of-the-art gradient-based methods, on a long-studied machine learning benchmark in $2000$ dimensions, despite using just $100$ particles and no gradient information.
This encourages the use and further investigation of CBO as a training algorithm for challenging machine learning tasks.

\subsubsection{Organization.} \label{subsec:organization}
In Section~\ref{sec:convergence} we first recall details about the well-posedness of the mean-field dynamics~\eqref{eq:fokker_planck_anisotropic} in the case of anisotropic diffusion before we present the main theoretical result about the convergence of anisotropic CBO in mean-field law.
The proof follows in Section~\ref{sec:proof_main_theorem_anisotropic}.
Section~\ref{sec:numerics} illustrates the practicability of the method on a benchmark problem and Section~\ref{sec:conclusion} concludes the paper.

For the sake of reproducible research, in the GitHub repository \url{https://github.com/KonstantinRiedl/CBOGlobalConvergenceAnalysis} we provide the Matlab code implementing the CBO algorithm used in this work.

\subsubsection{Notation.}
$B^\infty_{r}(u)$ is a closed $\ell^\infty$ ball in $\bbR^d$ with center~$u$ and radius~$r$.
For the space of continuous functions $f:X\rightarrow Y$ we write $\CC(X,Y)$, with $X\subset\bbR^n$, $n\in\mathbb{N}$, and a suitable topological space~$Y$.
For $X\subset\bbR^n$ open and for $Y=\bbR^m$, $m\in\mathbb{N}$, the function space $\CC^k_{c}(X,Y)$ contains functions $f\in\CC(X,Y)$ that are $k$-times continuously differentiable and compactly supported.
$Y$ is omitted if $Y=\bbR$.

The objects of study are laws of stochastic processes, $\rho\in\CC([0,T],\CP(\bbR^d))$, where $\CP(\bbR^d)$ contains all Borel probability measures over $\bbR^d$.
$\rho_t\in\CP(\bbR^d)$ is a snapshot of such law at time~$t$ and $\indivmeasure$ some fixed distribution.
Measures~$\indivmeasure \in \CP(\bbR^d)$ with finite $p$-th moment are collected in $\CP_p(\bbR^d)$.
For any $1\leq p<\infty$, $W_p$ denotes the \mbox{Wasserstein-$p$} distance.
$\bbE(\indivmeasure)$ is the expectation of a probability measure $\indivmeasure$.

\section{Global Convergence in Mean-Field Law} \label{sec:convergence}

In this section we first recite a well-posedness result about the Fokker-Planck equation~\eqref{eq:fokker_planck_anisotropic} and then present the main result about global convergence.

\subsection{Definition of Weak Solutions and Well-Posedness} \label{subsec:definition of weak solutions}

We begin by defining weak solutions of the Fokker-Planck equation~\eqref{eq:fokker_planck_anisotropic}.

\begin{definition} \label{def:fokker_planck_anisotropic_weak_sense}
	Let $\rho_0 \in \CP(\bbR^d)$, $T > 0$.
	We say $\rho\in\CC([0,T],\CP(\bbR^d))$ satisfies the Fokker-Planck equation~\eqref{eq:fokker_planck_anisotropic} with initial condition $\rho_0$ in the weak sense in the time interval $[0,T]$, if we have for all $\phi \in \CC_c^{\infty}(\bbR^d)$ and all $t \in (0,T)$
	\begin{equation} \label{eq:weak_solution_identity_anisotropic}
	\begin{aligned}
		\frac{d}{dt}\int \phi(v) \,d\rho_t(v) =
		&- \lambda\int \sum_{k=1}^d (v - \conspoint{\rho_t})_k \partial_k \phi(v) \, d\rho_t(v)\\
		&+ \frac{\sigma^2}{2} \int \sum_{k=1}^d D\!\left(v-\conspoint{\rho_t}\right)_{kk}^2  \partial^2_{kk} \phi(v) \,d\rho_t(v)
	\end{aligned}
	\end{equation}
	and $\lim_{t\rightarrow 0}\rho_t = \rho_0$ pointwise.
\end{definition}

In what follows the case of CBO with anisotropic diffusion is considered, i.e.,~$D\!\left(v-\conspoint{\cdot}\right) = \diag\left(v-\conspoint{\cdot}\right)$ in Equations~\eqref{eq:dyn_micro}, \eqref{eq:fokker_planck_anisotropic} and~\eqref{eq:weak_solution_identity_anisotropic}.

Analogously to the well-posedness results \cite[Theorems~3.1, 3.2]{carrillo2018analytical} for CBO with isotropic diffusion, we can obtain well-posedness of~\eqref{eq:fokker_planck_anisotropic} for anisotropic CBO.

\begin{theorem} \label{thm:well-posedness_FP_anisotropic}
	Let $T > 0$, $\rho_0 \in \CP_4(\bbR^d)$ and consider $\CE : \bbR^d\rightarrow \bbR$ with $\minobj:=\CE(\globmin) > -\infty$, which, for some constants $C_1,C_2 > 0$, satisfies
	\begin{align*}
		\abs{\CE(v)-\CE(w)} &\leq C_1(\N{v}_2 + \N{w}_2)\N{v-w}_2,\quad \textrm{for all } v,w \in \bbR^d,\\
		\CE(v) - \minobj &\leq C_2 \big(1+\N{v}_2^2\big),\quad \textrm{for all } v \in \bbR^d.
	\end{align*}
	If in addition, either $\sup_{v \in \bbR^d}\CE(v) < \infty$, or, for some $C_3,C_4 > 0$, $\CE$ satisfies
	\begin{align*} \label{eq:quadratic_growth_condition_car}
		\CE(v) - \minobj \geq C_3\N{v}_2^2,\quad \textrm{for all } \N{v}_2 \geq C_4,
	\end{align*}
	then there exists a law $\rho \in \CC([0,T], \CP_4(\bbR^d))$ weakly satisfying Equation~\eqref{eq:fokker_planck_anisotropic}.
\end{theorem}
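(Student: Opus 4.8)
The plan is to follow the by-now standard route for McKean--Vlasov type Fokker--Planck equations, adapting the isotropic argument of \cite[Theorems~3.1, 3.2]{carrillo2018analytical} to the anisotropic diffusion coefficient $\diag(\,\cdot\,)$. First I would recast weak solutions of~\eqref{eq:fokker_planck_anisotropic} as laws of the associated nonlinear (McKean--Vlasov) SDE
\begin{equation*}
  dV_t = -\lambda\bigl(V_t - \conspoint{\rho_t}\bigr)\,dt + \sigma\,\diag\bigl(V_t - \conspoint{\rho_t}\bigr)\,dB_t,
  \qquad \rho_t = \Law(V_t),
\end{equation*}
with $V_0\sim\rho_0$: by Itô's formula the law of any solution of this SDE satisfies~\eqref{eq:weak_solution_identity_anisotropic}, so it suffices to produce such a process with $\rho\in\CC([0,T],\CP_4(\bbR^d))$. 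The self-consistency in $\rho$ is resolved by a Leray--Schauder fixed-point argument: on a suitable convex, closed, compact subset $\mathcal{X}$ of $\CC([0,T],\CP_4(\bbR^d))$ I define the map $\mathcal{T}$ that sends a curve $\varrho$ to $\Law(V)$, where $V$ solves the SDE above with the nonlocal term \emph{frozen}, i.e.\ with $\conspoint{\varrho_t}$ treated as a prescribed time-dependent vector $u_t := \conspoint{\varrho_t}$. A fixed point of $\mathcal{T}$ is precisely a weak solution.

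Before the fixed point can be set up I would establish the two structural facts that make $\mathcal{T}$ well defined and continuous. \textit{(i) Well-posedness of the frozen SDE}: once $t\mapsto u_t$ is a fixed continuous curve, the drift $-\lambda(v-u_t)$ and the diffusion $\sigma\diag(v-u_t)$ are globally Lipschitz and of linear growth in $v$, so the SDE has a unique strong solution and, via Itô together with Grönwall, a fourth moment bounded uniformly on $[0,T]$ in terms of the fourth moment of $\rho_0$. \textit{(ii) Stability of the consensus point}: using $\omegaa(v)=\exp(-\alpha\CE(v))$ together with the assumptions on $\CE$, I would show that $\N{\omegaa}_{L_1(\varrho)}$ is bounded away from $0$ and from $\infty$ on $\mathcal{X}$ (here the upper quadratic-growth bound and either boundedness or the coercive lower bound on $\CE$ guarantee nondegeneracy of the normalization), and deduce the Lipschitz-in-Wasserstein estimate
\begin{equation*}
  \N{\conspoint{\varrho} - \conspoint{\widetilde\varrho}}_2 \le C\,W_2(\varrho,\widetilde\varrho),
\end{equation*}
valid uniformly over measures with a common moment bound; the Lipschitz hypothesis on $\CE$ enters here precisely to control $v\mapsto v\,\omegaa(v)$.

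Next I would assemble these into the fixed-point scheme. The fourth-moment bound of (i) yields, through Prokhorov's theorem, tightness and hence relative compactness of the snapshots, while Itô isometry and Burkholder--Davis--Gundy estimates give a uniform $W_2$ Hölder-$\tfrac12$ modulus of continuity in time, so that $\mathcal{X}$ can be chosen invariant under $\mathcal{T}$ and $\mathcal{T}(\mathcal{X})$ relatively compact in $\CC([0,T],\CP_4(\bbR^d))$ by an Arzelà--Ascoli argument. Continuity of $\mathcal{T}$ follows from stability estimate (ii) combined with standard $L^2$ stability of SDE solutions under perturbation of the (now Lipschitz) coefficients. Schauder's fixed-point theorem then produces $\rho\in\mathcal{X}$ with $\mathcal{T}(\rho)=\rho$; unfreezing $u_t=\conspoint{\rho_t}$ shows $\rho$ is the law of a genuine McKean--Vlasov solution, and a final application of Itô's formula verifies the weak identity~\eqref{eq:weak_solution_identity_anisotropic} together with $\lim_{t\to0}\rho_t=\rho_0$.

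The main obstacle I anticipate is step (ii): the consensus point is nonlocal and nonlinear in $\rho$, and its stability hinges on a uniform lower bound for the normalization $\N{\omegaa}_{L_1(\varrho)}$. This is exactly why the theorem imposes the two-sided growth structure on $\CE$ — the quadratic upper bound keeps $\conspoint{\varrho}$ and the moments finite, while boundedness of $\CE$ from above or the coercive quadratic lower bound prevents the weight $\omegaa$ from escaping to infinity and thereby degenerating the normalization. The anisotropic diffusion itself causes no extra trouble beyond the isotropic case, since $\diag(v-u_t)$ is still affine in $v$; its coordinate-wise structure only changes the constants in the moment estimates, not their form.
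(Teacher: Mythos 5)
Your proposal is correct and follows essentially the same route as the paper, whose own proof consists of a single-line reference to \cite[Theorems~3.1, 3.2]{carrillo2018analytical}: the frozen-coefficient SDE, the Wasserstein stability of $\conspointnoarg$ relying on a lower bound for $\Nnormal{\omegaa}_{L_1(\varrho)}$, the moment and H\"older-in-time estimates, and the Schauder/Leray--Schauder fixed point are exactly the ingredients of that argument, and your observation that $\diag(v-u_t)$ remains affine in $v$ is precisely why the anisotropic case carries over with only changed constants. In effect you have supplied the details the paper delegates to the citation.
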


\begin{proof}
	The proof is based on the Leray-Schauder fixed point theorem and uses the same arguments as the ones provided for \cite[Theorems~3.1, 3.2]{carrillo2018analytical}.
\end{proof}

\begin{remark} \label{rem:test_functions_redefine}
As discussed in \cite[Remark~7]{fornasier2021consensus}, the proof of Theorem~\ref{thm:well-posedness_FP_anisotropic} justifies an extension of the test function space $\CC^{\infty}_{c}(\bbR^d)$ in Definition~\ref{def:fokker_planck_anisotropic_weak_sense} to
\begin{equation*}
\begin{split}
	\CC^2_*(\bbR^d) := \big\{\phi\in\CC^2(\bbR^d) :
		&\abs{\partial_k\phi(v)}\leq c(1+\abs{v_k}) \text{ and } \Nnormal{\partial^2_{kk} \phi}_\infty < \infty \\
		&\text{ for all } k\in\{1,\dots,d\} \text{ and some constant } c>0\big\}.
\end{split}
\end{equation*}
\end{remark}

\subsection{Main Results} \label{subsec:main_results}

We now present the main result about global convergence in mean-field law for objective functions that satisfy the following conditions.

\begin{definition}[Assumptions] \label{def:assumptions}
	We consider functions $\CE \in \CC(\bbR^d)$, for which
	\begin{enumerate}[label=A\arabic*,labelsep=10pt,leftmargin=35pt]
		\item\label{asm:zero_global_separable} there exists $v^*\in\bbR^d$ such that $\CE(\globmin)=\inf_{v\in\bbR^d} \CE(v)=:\underbar{\CE}$, and
		\item\label{asm:icp_separable} there exist $\CE_{\infty},R_0,\eta > 0$, and  $\nu\in(0,\infty)$ such that
		\begin{align}
			\Nnormal{v-v^*}_\infty &\leq \frac{1}{\eta}\big(\CE(v)-\underbar{\CE}\big)^{\nu} \quad \textrm{ for all } v \in B^\infty_{R_0}(\globmin), \label{eq:asm_icp_vstar_separable} \\
			\CE_{\infty} &< \CE(v)-\underbar{\CE} \quad \textrm{ for all } v \in \big(B^\infty_{R_0}(\globmin)\big)^c. \label{eq:asm_icp_farfield_separable}
		\end{align}
\end{enumerate}
\end{definition}

Assumption~\ref{asm:icp_separable} can be regarded as a tractability condition of the energy landscape around the minimizer and in the farfield.
Equation~\eqref{eq:asm_icp_vstar_separable} requires the local coercivity of~$\CE$, whereas \eqref{eq:asm_icp_farfield_separable} prevents that $\CE(v)\approx\underbar{\CE}$ far away from~$\globmin$.

Definition~\ref{def:assumptions} covers a wide range of function classes, including for instance the Rastrigin function, see Figure~\ref{fig:objective1d}, and objectives related to various machine learning tasks, see, e.g.,~\cite{fornasier2020consensus_sphere_convergence}.


\begin{theorem} \label{thm:global_convergence_main_anisotropic}
	Let $\CE$ be as in Definition~\ref{def:assumptions}.
	Moreover, let $\rho_0 \in \CP_{\!4}(\bbR^d)$ be such that
	\begin{align} \label{eq:condition_initial_measure}
		\rho_0(B^\infty_{r}(\globmin)) > 0\quad \textrm{for all}\quad r > 0.
	\end{align}
	Define $\CV(\rho_t) := 1/2 \int \N{v-\globmin}_2^2 d\rho_t(v)$.
	Fix any $\varepsilon \in (0,\CV(\rho_0))$ and $\tau \in (0,1)$, parameters $\lambda,\sigma > 0$ with $2\lambda > \sigma^2 $, and the time horizon
	\begin{align} \label{eq:end_time_star_statement_no_H}
		T^* := \frac{1}{(1-\tau)\left(2\lambda-\sigma^2\right)}\log\left(\frac{\CV(\rho_0)}{\varepsilon}\right).
	\end{align}
	Then there exists $\alpha_0>0$, which depends (among problem dependent quantities) on $\varepsilon$ and $\tau$, such that for all $\alpha>\alpha_0$,
	if $\rho \in \CC([0,T^*], \CP_4(\bbR^d))$ is a weak solution to the Fokker-Planck equation~\eqref{eq:fokker_planck_anisotropic} on the time interval $[0,T^*]$ with initial condition $\rho_0$, we have $\min_{t \in [0,T^*]}\CV(\rho_t) \leq \varepsilon$.
	Furthermore, until $\CV(\rho_t)$ reaches the prescribed accuracy $\varepsilon$, we have the exponential decay
	\begin{equation*}
		\CV(\rho_t) \leq \CV(\rho_0) \exp\left(-(1-\tau)\left(2\lambda- \sigma^2\right) t\right)
	\end{equation*}
	and, up to a constant, the same behavior for $W_2^2(\rho_t,\delta_{\globmin})$.
\end{theorem}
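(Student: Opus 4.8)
The plan is to track the Lyapunov functional $\CV(\rho_t)$ directly at the level of the Fokker--Planck equation~\eqref{eq:fokker_planck_anisotropic} and to show it obeys a closed differential inequality forcing exponential decay. First I would insert the test function $\phi(v)=\tfrac12\N{v-\globmin}_2^2$ into the weak formulation~\eqref{eq:weak_solution_identity_anisotropic}. Although $\phi$ is not compactly supported, it belongs to the enlarged admissible class $\CC^2_*(\bbR^d)$ of Remark~\ref{rem:test_functions_redefine}, since $\partial_k\phi(v)=(v-\globmin)_k$ and $\partial^2_{kk}\phi\equiv1$, so the identity applies. The decisive structural point of the anisotropic case is that the diffusion contributes $D(\cdot)_{kk}^2=(v-\conspoint{\rho_t})_k^2$, whence $\sum_{k=1}^d(v-\conspoint{\rho_t})_k^2=\N{v-\conspoint{\rho_t}}_2^2$ \emph{without} the factor $d$ present for isotropic diffusion. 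Splitting $v-\conspoint{\rho_t}=(v-\globmin)-(\conspoint{\rho_t}-\globmin)$ in both integrals, I expect to obtain
\[
\frac{d}{dt}\CV(\rho_t)=-(2\lambda-\sigma^2)\CV(\rho_t)+(\lambda-\sigma^2)(\conspoint{\rho_t}-\globmin)\cdot\!\int(v-\globmin)\,d\rho_t(v)+\tfrac{\sigma^2}{2}\N{\conspoint{\rho_t}-\globmin}_2^2,
\]
which already displays the dimension-independent rate $2\lambda-\sigma^2$; the last two summands are error terms governed by the proximity of the consensus point to $\globmin$.

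Next I would absorb these errors. Using $\Nnormal{\int(v-\globmin)\,d\rho_t}_2\le(2\CV(\rho_t))^{1/2}$ by Jensen, together with Young's inequality applied to the cross term (irrespective of the sign of $\lambda-\sigma^2$), I would bound the error by $\tfrac{\tau}{2}(2\lambda-\sigma^2)\CV(\rho_t)+C\N{\conspoint{\rho_t}-\globmin}_2^2$ for an explicit $C=C(\lambda,\sigma,\tau)$. The entire problem then reduces to showing that $\N{\conspoint{\rho_t}-\globmin}_2^2$ is dominated by $\tfrac{\tau}{2}(2\lambda-\sigma^2)C^{-1}\CV(\rho_t)$ as long as $\CV(\rho_t)\ge\varepsilon$, for then $\tfrac{d}{dt}\CV(\rho_t)\le-(1-\tau)(2\lambda-\sigma^2)\CV(\rho_t)$ on that regime.

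The core of the argument, and its main obstacle, is a \emph{quantitative Laplace principle} controlling $\conspoint{\rho_t}-\globmin$. Fixing $r\in(0,R_0]$ and $q>0$ and writing $\CE_r:=\sup_{v\in B^\infty_r(\globmin)}\CE(v)$, I aim for an estimate of the form
\[
\Nnormal{\conspoint{\rho_t}-\globmin}_\infty\le\frac{(q+\CE_r-\minobj)^\nu}{\eta}+\frac{\exp(-\alpha q)}{\rho_t(B^\infty_r(\globmin))}\int\Nnormal{v-\globmin}_\infty\,d\rho_t(v),
\]
where the first summand stems from the local coercivity~\eqref{eq:asm_icp_vstar_separable} on $B^\infty_r(\globmin)$ and the second from the exponentially small Gibbs weight far from $\globmin$, there controlled through~\eqref{eq:asm_icp_farfield_separable}. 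Given $\varepsilon$, continuity of $\CE$ at $\globmin$ lets me choose $r,q$ so small that the first term meets the required threshold, and then $\alpha_0$ so large that the second term does too for all $\alpha>\alpha_0$. The delicate ingredient is the denominator $\rho_t(B^\infty_r(\globmin))$, which must be bounded below \emph{uniformly} over $t\in[0,T^*]$; starting from $\rho_0(B^\infty_r(\globmin))>0$ (ensured by~\eqref{eq:condition_initial_measure}), I would test~\eqref{eq:weak_solution_identity_anisotropic} with a coordinate-wise smooth mollification $\phi_r$ of $\mathbbm{1}_{B^\infty_r(\globmin)}$ and derive $\tfrac{d}{dt}\int\phi_r\,d\rho_t\ge-c\int\phi_r\,d\rho_t$, so that Grönwall yields a strictly positive lower bound $\rho_t(B^\infty_r(\globmin))\ge\rho_0(B^\infty_{r/2}(\globmin))e^{-cT^*}$. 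I expect this mass bound to be the most technical part, although the diagonal structure of the anisotropic diffusion and the $\ell^\infty$ geometry of the assumptions should make the mollifier construction factor coordinate-wise and the argument of~\cite{ourpaper} transfer with essentially only the diffusion computation changed.

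Finally, a continuity/Grönwall argument closes the proof. Letting $T_\varepsilon$ be the first time $\CV(\rho_t)$ reaches $\varepsilon$, the decay estimate holds on $[0,T_\varepsilon)$; were $T_\varepsilon>T^*$, evaluating it at $t=T^*$ would give $\CV(\rho_{T^*})\le\CV(\rho_0)\exp(-(1-\tau)(2\lambda-\sigma^2)T^*)=\varepsilon$ by the definition~\eqref{eq:end_time_star_statement_no_H} of $T^*$, contradicting $\CV(\rho_t)>\varepsilon$ on $[0,T^*]$. Hence $\min_{t\in[0,T^*]}\CV(\rho_t)\le\varepsilon$, with the asserted exponential decay up to that time. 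The statement for $W_2^2(\rho_t,\delta_{\globmin})$ is then immediate, since $W_2^2(\rho_t,\delta_{\globmin})=\int\N{v-\globmin}_2^2\,d\rho_t(v)=2\CV(\rho_t)$.
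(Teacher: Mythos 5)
Your proposal is correct and follows essentially the same route as the paper: the evolution inequality for $\CV(\rho_t)$ obtained by testing with $\phi(v)=\tfrac12\N{v-\globmin}_2^2$ in the enlarged class $\CC^2_*(\bbR^d)$, the quantitative Laplace principle in the $\ell^\infty$ geometry (Proposition~\ref{prop:laplace_alt_anisotropic}), the tensor-product mollifier argument for the lower bound on $\rho_t(B^\infty_r(\globmin))$ (Proposition~\ref{lem:lower_bound_probability_anisotropic}), and a Gr\"onwall plus continuity argument. The only bookkeeping difference is that the paper's stopping time \eqref{eq:endtime_T} additionally includes the condition $\N{\conspoint{\rho_t}-\globmin}_2 < C(t)$, which breaks the mild circularity between the consensus-point estimate and the mass lower bound (the latter needs $B=\sup_{t}\N{\conspoint{\rho_t}-\globmin}_\infty<\infty$); your sketch, which defines the stopping time via $\CV$ alone, would need this device when written out in full.
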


The rate of convergence~$(2\lambda-\sigma^2)$ obtained in Theorem~\ref{thm:global_convergence_main_anisotropic} is confirmed numerically by the experiments depicted in Figure~\ref{fig:Vindifferentd}.
We emphasize the dimension-independent convergence of CBO with anisotropic diffusion, contrasting the dimension-dependent rate~$(2\lambda-d\sigma^2)$ of isotropic CBO, cf.\@~\cite[Theorem~12]{fornasier2021consensus}.

Refining the argument of the proof of Theorem~\ref{thm:global_convergence_main_anisotropic} allows to show that the time~$T$, where $\CV(\rho_T)=\varepsilon$ is achieved, satisfies $T\in\big[\frac{1-\tau}{(1+\tau/2)}\;\!T^*,T^*\big]$.
For the technical details we refer to the proof of \cite[Theorem~12]{fornasier2021consensus},
where an analogous statement is shown to hold in the setting of isotropic noise.
A proof for the herein investigated anisotropic setting is provided in full detail in the dissertation of the third author.

\section{Proof of Theorem \ref{thm:global_convergence_main_anisotropic}} \label{sec:proof_main_theorem_anisotropic}

This section provides the proof details for Theorem~\ref{thm:global_convergence_main_anisotropic}, starting with a sketch in Section~\ref{subsec:proof_sketch}.
Sections~\ref{subsec:evolution_convex}--\ref{subsec:lower_bound_prob_mass} present statements, which are needed in the proof and may be of independent interest. Section~\ref{subsec:proof_main} completes the proof.
\begin{remark} \label{rem:wlog_minobj_zero}
	Without loss of generality we assume $\underbar{\CE} = 0$ throughout the proof.
\end{remark}

\subsection{Proof Sketch} \label{subsec:proof_sketch}

The main idea is to show that $\CV(\rho_t)$ satisfies the  differential inequality
\begin{align} \label{eq:proof_sketch_ODE_ineq_anisotropic}
	\frac{d}{dt}\CV(\rho_t) \leq -(1-\tau)\left(2\lambda-\sigma^2\right)\CV(\rho_t)
\end{align}
until $\CV(\rho_T) \leq \varepsilon$.
The first step towards \eqref{eq:proof_sketch_ODE_ineq_anisotropic} is to derive a  differential inequality for $\CV(\rho_t)$ using the dynamics of $\rho$, which is done in Lemma~\ref{lem:evolution_of_objective_anisotropic}.
In order to control the appearing quantity $\N{\conspoint{\rho_t}-v^*}_2$, we establish a quantitative Laplace principle.
Namely, under the inverse continuity property~\ref{asm:icp_separable}, Proposition~\ref{prop:laplace_alt_anisotropic} shows
\begin{align*}
	\N{\conspoint{\rho_t}-v^*}_2\lesssim \ell(r) + \frac{\sqrt{d}\exp(-\alpha r)}{\rho_t(B^\infty_{r}(v^*))},\quad \textrm{for sufficiently small } r > 0,
\end{align*}
where $\ell$ is decreasing with $\ell(r)\rightarrow 0^+$ as $r\rightarrow 0$.
Thus, $\N{\conspoint{\rho_t}-v^*}_2$ can be made arbitrarily small by suitable choices of~$r\ll 1$ and $\alpha \gg 1$, as long as we can guarantee $\rho_t(B^\infty_{r}(v^*)) > 0$ for all $r>0$ and at all times~$t\in[0,T]$.
The latter requires non-zero initial mass $\rho_0(B^\infty_{r}(v^*))$ as well as an active Brownian motion, as made rigorous in Proposition~\ref{lem:lower_bound_probability_anisotropic}.

\subsection{Evolution of the Mean-Field Limit} \label{subsec:evolution_convex}

We now derive the evolution inequality of the energy functional $\CV(\rho_t)$.

\begin{lemma} \label{lem:evolution_of_objective_anisotropic}
	Let $\CE:\bbR^d\rightarrow\bbR$, and fix $\alpha,\lambda,\sigma > 0$.
	Moreover, let $T>0$ and let $\rho \in \CC([0,T], \CP_4(\bbR^d))$ be a weak solution to Equation~\eqref{eq:fokker_planck_anisotropic}.
	Then $\CV(\rho_t)$ satisfies
	\begin{align*}
		\frac{d}{dt}\CV(\rho_t) \leq
		&\,-\left(2\lambda-\sigma^2\right) \CV(\rho_t) + \sqrt{2}\left(\lambda+\sigma^2\right) \sqrt{\CV(\rho_t)} \N{\conspoint{\rho_t}-v^*}_2 \\
		&\,+ \frac{\sigma^2}{2} \N{\conspoint{\rho_t}-v^*}_2^2.
	\end{align*}
\end{lemma}

\begin{proof}
Noting that $\phi(v) = 1/2\N{v-v^*}_2^2$ is in $\CC_*^2(\bbR^d)$ and recalling that $\rho$ satisfies the identity~\eqref{eq:weak_solution_identity_anisotropic} for all test functions in $\CC_*^2(\bbR^d)$, see Remark~\ref{rem:test_functions_redefine},
we obtain
\begin{align*}
	\frac{d}{dt} \CV(\rho_t)
	= -\lambda \int \langle  v-v^*, v-\conspoint{\rho_t}\rangle \,d\rho_t(v) + \frac{\sigma^2}{2} \int \N{v-\conspoint{\rho_t}}_2^2 d\rho_t(v),
\end{align*}
where we used $\partial_k \phi(v) = \left(v-v^*\right)_k$ and $\partial^2_{kk} \phi(v) = 1$ for all $k\in\{1,\dots,d\}$.
Following the steps taken in~\cite[Lemma~17]{fornasier2021consensus} yields the statement. \hfill$\square$
\end{proof}

\subsection{Quantitative Laplace Principle} \label{subsec:quant_laplace}
The Laplace principle asserts that $-\log(\Nnormal{\omegaa}_{L_1(\indivmeasure)})/\alpha \rightarrow \underbar{\CE}$ as $\alpha\rightarrow \infty$ as long as the global minimizer $v^*$ is in the support of $\indivmeasure$.
Under the assumption of the inverse continuity property this can be used to qualitatively characterize the proximity of $\conspoint{\indivmeasure}$ to the global minimizer $v^*$.
However, as it neither allows to quantify this proximity nor gives a suggestion on how to choose $\alpha$ to reach a certain approximation quality, we introduced a quantitative version in \cite[Proposition~21]{fornasier2021consensus}, which we now adapt suitably to satisfy the anisotropic setting.

\begin{proposition} \label{prop:laplace_alt_anisotropic}
	Let $\underbar{\CE} = 0$, $\indivmeasure \in \CP(\bbR^d)$ and fix $\alpha  > 0$. For any $r > 0$ we define $\CE_{r} := \sup_{v \in B^\infty_{r}(\globmin)}\CE(v)$.
	Then, under the inverse continuity property~\ref{asm:icp_separable}, for any $r \in (0,R_0]$ and $q > 0$  such that $q + \CE_{r} \leq \CE_{\infty}$, we have
	\begin{align*}
		\N{\conspoint{\indivmeasure} - v^*}_2 \leq \frac{\sqrt{d}(q + \CE_{r})^{\nu}}{\eta} + \frac{\sqrt{d}\exp(-\alpha q)}{\indivmeasure(B^\infty_{r}(\globmin))}\int\N{v-v^*}_2 d\indivmeasure(v).
	\end{align*}
\end{proposition}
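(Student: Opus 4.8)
The plan is to estimate $\N{\conspoint{\indivmeasure} - \globmin}_2$ coordinate by coordinate, exploiting the $\ell^\infty$ geometry that is natural to the anisotropic assumption~\ref{asm:icp_separable}, and to convert back to the Euclidean norm only at the very end. Since the normalized weights integrate to one, we have $\conspoint{\indivmeasure} - \globmin = \int (v-\globmin)\,\omegaa(v)/\N{\omegaa}_{L_1(\indivmeasure)}\,d\indivmeasure(v)$, so for each coordinate $\ell$ Jensen/the triangle inequality gives $\abs{(\conspoint{\indivmeasure}-\globmin)_\ell} \leq \int \Nnormal{v-\globmin}_\infty\, \omegaa(v)/\N{\omegaa}_{L_1(\indivmeasure)}\,d\indivmeasure(v)$, a bound uniform in $\ell$. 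Taking the maximum over $\ell$ and then using $\N{x}_2 \leq \sqrt{d}\,\Nnormal{x}_\infty$ is exactly what produces the two factors $\sqrt{d}$ appearing in the statement; this coordinate-wise routing is the reason the $\sqrt d$ enters both terms rather than just the first.

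The core of the argument is to split the integral $\int \Nnormal{v-\globmin}_\infty\, \omegaa(v)\,d\indivmeasure(v)$ over the sublevel set $\Omega := \{v : \CE(v) \leq q + \CE_{r}\}$ and its complement. The key enabling observation, and the step I expect to be the main obstacle to set up correctly, is that the admissibility hypothesis $q + \CE_{r} \leq \CE_{\infty}$ together with the farfield bound~\eqref{eq:asm_icp_farfield_separable} forces $\Omega \subseteq B^\infty_{R_0}(\globmin)$: if some $v\in\Omega$ were outside this ball, \eqref{eq:asm_icp_farfield_separable} would give $\CE(v)>\CE_\infty\geq q+\CE_r\geq\CE(v)$, a contradiction. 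This containment is precisely what licenses the use of inverse continuity on $\Omega$, and the whole estimate collapses without it. On $\Omega$ the local coercivity~\eqref{eq:asm_icp_vstar_separable} (using $\minobj=0$) and monotonicity of $t\mapsto t^\nu$ yield $\Nnormal{v-\globmin}_\infty \leq (\CE(v))^{\nu}/\eta \leq (q+\CE_{r})^{\nu}/\eta$, so the $\Omega$-contribution is at most $(q+\CE_{r})^{\nu}/\eta$ once we discard the normalized weight (which integrates to at most one).

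For the complement $\Omega^c = \{\CE(v) > q + \CE_{r}\}$ I would bound the numerator weight pointwise by $\omegaa(v) < \exp(-\alpha(q+\CE_{r}))$ and bound the normalization from below by restricting its defining integral to $B^\infty_{r}(\globmin)$: since $\CE \leq \CE_{r}$ there by definition of $\CE_r$, one gets $\N{\omegaa}_{L_1(\indivmeasure)} \geq \exp(-\alpha\CE_{r})\,\indivmeasure(B^\infty_{r}(\globmin))$. Dividing, the factor $\exp(-\alpha\CE_{r})$ cancels and leaves $\exp(-\alpha q)/\indivmeasure(B^\infty_{r}(\globmin))$ multiplying $\int_{\Omega^c}\Nnormal{v-\globmin}_\infty\,d\indivmeasure(v) \leq \int \N{v-\globmin}_2\,d\indivmeasure(v)$, which is the second term. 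Adding the two regional estimates, maximizing over coordinates, and multiplying through by $\sqrt{d}$ then gives the claimed inequality. The only ingredients are the normalization identity, the two parts of~\ref{asm:icp_separable}, monotonicity of $t\mapsto t^{\nu}$, and the elementary comparison $\Nnormal{\cdot}_\infty \leq \N{\cdot}_2 \leq \sqrt{d}\,\Nnormal{\cdot}_\infty$; no regularity of $\CE$ beyond what~\ref{asm:icp_separable} provides is needed.
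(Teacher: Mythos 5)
Your argument is correct and follows essentially the same route as the paper: you derive the $\ell^\infty$ bound $\Nnormal{\conspoint{\indivmeasure}-\globmin}_\infty \leq (q+\CE_r)^{\nu}/\eta + \exp(-\alpha q)\,\indivmeasure(B^\infty_{r}(\globmin))^{-1}\int\Nnormal{v-\globmin}_\infty\,d\indivmeasure(v)$ by the standard quantitative Laplace decomposition over the sublevel set $\{\CE \leq q+\CE_r\}$ (which the admissibility condition traps inside $B^\infty_{R_0}(\globmin)$) and its complement, exactly as the paper does by transplanting the isotropic argument of \cite[Proposition~17]{ourpaper} with $\ell^\infty$ balls and norms, and then convert via $\Nnormal{\cdot}_\infty\leq\N{\cdot}_2\leq\sqrt{d}\,\Nnormal{\cdot}_\infty$. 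The only difference is that you spell out the details the paper delegates to the cited reference; all steps, including the lower bound $\N{\omegaa}_{L_1(\indivmeasure)}\geq \exp(-\alpha\CE_r)\,\indivmeasure(B^\infty_r(\globmin))$ and the cancellation of $\exp(-\alpha\CE_r)$, check out.
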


\begin{proof}
Following the lines of the proof of \cite[Proposition~22]{fornasier2021consensus} but replacing all $\ell^2$ balls and norms by $\ell^\infty$ balls and norms, respectively, we obtain
\begin{align*}
	\N{\conspoint{\indivmeasure} - v^*}_\infty \leq \frac{(q+\CE_r)^{\nu}}{\eta} + \frac{\exp\left(-\alpha q\right)}{\indivmeasure(B^\infty_{r}(v^*))}\int\N{v-v^*}_\infty d\indivmeasure(v).
\end{align*}
The statement now follows noting that $\N{\cdot}_\infty\leq\N{\cdot}_2\leq\sqrt{d}\N{\cdot}_\infty$. \hfill$\square$
\end{proof}

\subsection{A Lower Bound for the Probability Mass  around $v^*$} \label{subsec:lower_bound_prob_mass}
In this section we provide a lower bound for the probability mass of $\rho_t(B^\infty_{r}(v^*))$, where $r > 0$ is a small radius.
This is achieved by defining a mollifier $\phi_r$ so that $\rho_t(B^\infty_{r}(v^*)) \geq \int \phi_r(v)\,d\rho_t(v)$ and studying the evolution of the right-hand side.

\begin{lemma} \label{lem:properties_mollifier}
For $r > 0$ we define the mollifier $\phi_r : \bbR^d \rightarrow \bbR$ by
\begin{align} \label{eq:mollifier}
\phi_{r}(v) := \begin{cases}
	\prod_{k=1}^d \exp\left(1-\frac{r^2}{r^2-\left(v-v^*\right)_k^2}\right),& \textrm{ if } \N{v-v^*}_\infty < r,\\
	0,& \textrm{ else.}
\end{cases}
\end{align}
We have $\Im(\phi_r) = [0,1]$, $\supp(\phi_r) = B^\infty_{r}(v^*)$, $\phi_{r} \in \CC_c^{\infty}(\bbR^d)$ and
\begin{align*}
	\partial_{k} \phi_{r}(v) &= -2r^2 \frac{\left(v-v^*\right)_k}{\left(r^2-\left(v-v^*\right)^2_k\right)^2}\phi_{r}(v),\\
	\partial^2_{kk} \phi_{r}(v) &= 2r^2 \left(\frac{2\left(2\left(v-v^*\right)^2_k-r^2\right)\left(v-v^*\right)_k^2 - \left(r^2-\left(v-v^*\right)^2_k\right)^2}{\left(r^2-\left(v-v^*\right)^2_k\right)^4}\right)\phi_{r}(v).
\end{align*}
\end{lemma}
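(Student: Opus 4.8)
The plan is to exploit the separable product structure of $\phi_r$ and reduce every claim to a one-dimensional model function. Writing $u := v - v^*$ and introducing the scalar bump $g(s) := \exp\!\big(1 - r^2/(r^2-s^2)\big)$ for $|s| < r$ and $g(s) := 0$ for $|s|\geq r$, one checks directly that $\phi_r(v) = \prod_{k=1}^d g(u_k)$ holds for every $v\in\bbR^d$: if $\N{u}_\infty < r$ each factor is the stated exponential, while if $\N{u}_\infty \geq r$ at least one coordinate satisfies $|u_k|\geq r$, forcing both the product and $\phi_r(v)$ to vanish. All assertions then follow from properties of $g$ together with the fact that products of smooth functions are smooth.

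For the range and support I would first note that $0\leq g\leq 1$ on $\bbR$, since $r^2/(r^2-s^2)\geq 1$ for $|s|<r$ makes the exponent nonpositive; hence $0\leq\phi_r\leq 1$. Evaluating at $v=v^*$ gives $\phi_r(v^*)=g(0)^d=1$, while $\phi_r\equiv 0$ outside the open ball, so continuity and the intermediate value theorem yield $\Im(\phi_r)=[0,1]$. Since $\phi_r(v)>0$ precisely when every $|u_k|<r$, i.e.\ on the open ball $\{\N{v-v^*}_\infty<r\}$, passing to closures gives $\supp(\phi_r)=B^\infty_r(v^*)$, which is compact.

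The main obstacle is the $\CC^\infty$ smoothness of $g$ (and hence of the product) across the boundary points $s=\pm r$. On the open interval $(-r,r)$ an induction shows that every derivative has the form $g^{(m)}(s) = R_m(s)\,g(s)$, where $R_m$ is a rational function whose only poles lie at $s=\pm r$ and which blows up at most polynomially in $1/(r^2-s^2)$ as $s\to\pm r$ from within $(-r,r)$. Because $g(s)=\exp\!\big(1-r^2/(r^2-s^2)\big)$ decays faster than any power of $(r^2-s^2)$ as $r^2-s^2\to 0^+$, each $g^{(m)}(s)\to 0$ at the endpoints. A standard gluing argument (all one-sided derivatives exist and agree with the value $0$ coming from the exterior region) then certifies $g\in\CC^\infty(\bbR)$ with $g^{(m)}(\pm r)=0$ for all $m$. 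Smoothness and compact support of $\phi_r=\prod_k g(u_k)$ follow immediately, giving $\phi_r\in\CC_c^\infty(\bbR^d)$.

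It remains to verify the two derivative formulas, a routine logarithmic-differentiation computation on the open ball (both sides vanish elsewhere, where all derivatives are zero). From $g'(s)/g(s) = \tfrac{d}{ds}\big(1-r^2/(r^2-s^2)\big) = -2r^2 s/(r^2-s^2)^2$ one reads off $\partial_k\phi_r(v) = \big(g'(u_k)/g(u_k)\big)\phi_r(v)$, which is the claimed first-order formula with $s=(v-v^*)_k$. Differentiating once more via $g''=\big((g'/g)'+(g'/g)^2\big)g$ and simplifying the numerator $-2r^2(r^2+3s^2)(r^2-s^2)+4r^4 s^2 = 2r^2(3s^4-r^4)$, one obtains $g''(s)/g(s) = 2r^2(3s^4-r^4)/(r^2-s^2)^4$. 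Since the second coordinate derivative of the product touches only the $k$-th factor, $\partial_{kk}^2\phi_r(v) = \big(g''(u_k)/g(u_k)\big)\phi_r(v)$ matches the stated expression once one checks the identity $2(2s^2-r^2)s^2-(r^2-s^2)^2 = 3s^4-r^4$.
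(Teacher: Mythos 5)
Your proof is correct and takes essentially the same approach as the paper, whose entire argument is the observation that $\phi_r$ is a tensor product of classical one-dimensional mollifiers. You have simply supplied the standard details (smoothness of the scalar bump $g$ across $s=\pm r$ and the logarithmic-differentiation computation), and your algebraic reduction $2(2s^2-r^2)s^2-(r^2-s^2)^2=3s^4-r^4$ checks out against the stated formula for $\partial^2_{kk}\phi_r$.
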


\begin{proof}
	$\phi_r$ is a tensor product of classical well-studied mollifiers. \hfill$\square$
\end{proof}

\begin{proposition} \label{lem:lower_bound_probability_anisotropic}
	Let $T > 0$, $r > 0$, and fix parameters $\alpha,\lambda,\sigma > 0$.
	Assume $\rho\in\CC([0,T],\CP(\bbR^d))$ weakly solves the Fokker-Planck equation~\eqref{eq:fokker_planck_anisotropic} in the sense of
	Definition~\ref{def:fokker_planck_anisotropic_weak_sense} with initial condition $\rho_0 \in \CP(\bbR^d)$ and for $t \in [0,T]$.
	Then, for all $t\in[0,T]$ we have
	\begin{align} \label{eq:lower_bound_probability_anisotropic_rate}
		\rho_t\left(B^\infty_{r}(v^*)\right) 
		&\geq \left(\int\phi_{r}(v)\,d\rho_0(v)\right)\exp\left(-pt\right)
	\end{align}
	with
	\begin{align} \label{eq:def_p_anisotropic}
		p &:= 2d\max\left\{\frac{\lambda(cr\!+\!B\sqrt{c})}{(1\!-\!c)^2r}\!+\!\frac{\sigma^2(cr^2\!+\!B^2)(2c\!+\!1)}{(1\!-\!c)^4r^2},\frac{2\lambda^2}{(2c\!-\!1)\sigma^2}\right\}\!,
	\end{align}
	for any $B<\infty$ with $\sup_{t \in [0,T]}\N{\conspoint{\rho_{t}}-v^*}_\infty \leq B$
	and for any $c \in (1/2,1)$ satisfying $(1-c)^2 \leq (2c-1)c$.
\end{proposition}

\begin{remark}
	In order to ensure a finite decay rate~$p<\infty$ in Proposition~\ref{lem:lower_bound_probability_anisotropic} it is crucial to have a non-vanishing diffusion~$\sigma>0$.
\end{remark}

\begin{proof}[Proposition~\ref{lem:lower_bound_probability_anisotropic}]
By the properties of the mollifier in Lemma~\ref{lem:properties_mollifier} we have
\begin{align*}
	\rho_t\left(B^\infty_{r}(v^*)\right) \geq \int \phi_{r}(v)\,d\rho_t(v).
\end{align*}
Our strategy is to derive a lower bound for the right-hand side of this inequality.
Using the weak solution property of $\rho$ and the fact that  $\phi_{r}\in \CC^{\infty}_c(\bbR^d)$, we obtain
\begin{align}
	\frac{d}{dt}\int\phi_{r}(v)\,d\rho_t(v) &= \sum_{k=1}^d \int \big(T_{1k}(v) + T_{2k}(v)\big) \,d\rho_t(v) \label{eq:initial_evolution} \\
	\textrm{with}\quad T_{1k}(v) &:= -\lambda\left(v-\conspoint{\rho_t}\right)_k \partial_k\phi_r(v) \nonumber \\
	\textrm{and}\quad T_{2k}(v) &:= \frac{\sigma^2}{2} \left(v-\conspoint{\rho_t}\right)_k^2 \partial^2_{kk} \phi_{r}(v) \nonumber
\end{align}
for $k\in\{1,\dots,d\}$.
We now aim for showing $T_{1k}(v) + T_{2k}(v) \geq -p\phi_r(v)$ uniformly on $\bbR^d$ individually for each $k$ and for $p>0$ as in the statement.
Since the mollifier~$\phi_r$ and its derivatives vanish outside of $\Omega_r := \{v \in \bbR^d : \N{v-v^*}_\infty < r\}$
we restrict our attention to the open $\ell^\infty$-ball $\Omega_r$.
To achieve the lower bound over $\Omega_r$, we introduce for each $k\in\{1,\dots,d\}$ the subsets
\begin{align} \label{eq:region_q}
K_{1k} &:= \left\{v \in \bbR^d : \abs{\left(v-v^*\right)_k} > \sqrt{c}r\right\}
\end{align}
and
\begin{align}
\label{region_c}
\begin{aligned}
	&K_{2k} := \bigg\{v \in \bbR^d : -\lambda\left(v-\conspoint{\rho_t}\right)_k \left(v-v^*\right)_k \left(r^2-\left(v-v^*\right)_k^2\right)^2 \\
	&\qquad\qquad\qquad\qquad\qquad\quad > \tilde{c}r^2\frac{\sigma^2}{2} \left(v-\conspoint{\rho_t}\right)_k^2\left(v-v^*\right)_k^2\bigg\},
\end{aligned}
\end{align}
where $\tilde{c} := 2c-1\in(0,1)$.
For fixed $k$ we now decompose $\Omega_r$ according to
\begin{align} \label{eq:disjoint_sets}
\Omega_r = \left(K_{1k}^c \cap \Omega_r\right) \cup \left(K_{1k} \cap K_{2k}^c \cap \Omega_r\right) \cup \left(K_{1k} \cap K_{2k} \cap \Omega_r\right),
\end{align}
which is illustrated in Figure~\ref{fig:decomposition_support} for different positions of $\conspoint{\rho_t}$ and values of $\sigma$.
\begin{figure}[!ht]
\centering
\subcaptionbox{\footnotesize $\conspoint{\rho_t}\in\Omega_r$, $\sigma=0.2$\label{fig:decomposition1}}{\includegraphics[width=0.29\textwidth, trim=112 244 134 250,clip]{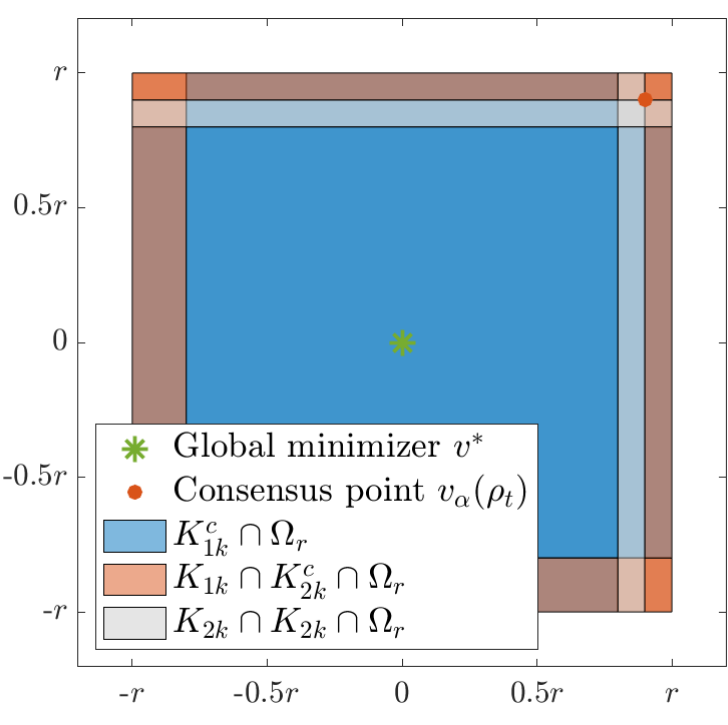}}
\hspace{1.5em}
\subcaptionbox{\footnotesize $\conspoint{\rho_t}\not\in\Omega_r$, $\sigma=0.2$\label{fig:decomposition3}}{\includegraphics[width=0.29\textwidth, keepaspectratio, trim=112 244 134 250,clip]{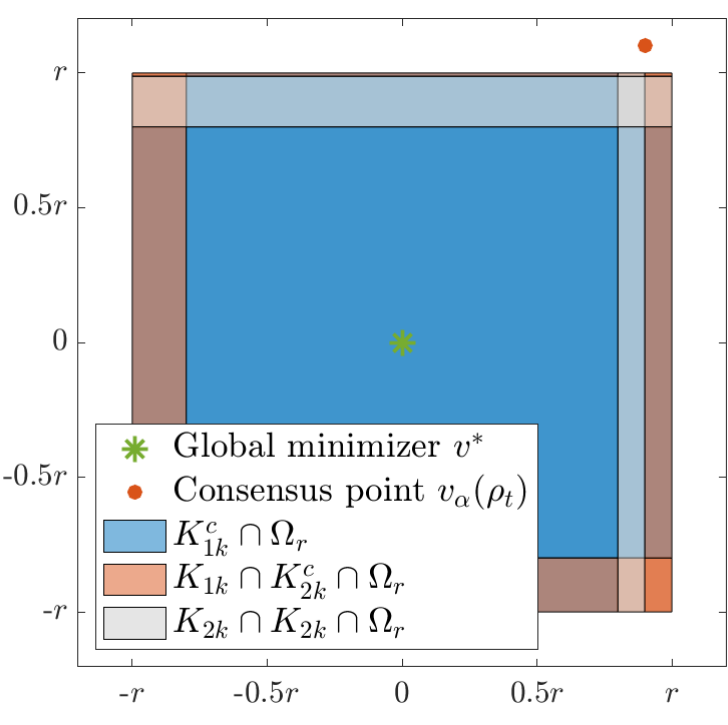}}
\hspace{1.5em}
\subcaptionbox{\footnotesize $\conspoint{\rho_t}\not\in\Omega_r$, $\sigma=1$\label{fig:decomposition2}}{\includegraphics[width=0.29\textwidth, trim=112 244 134 250,clip]{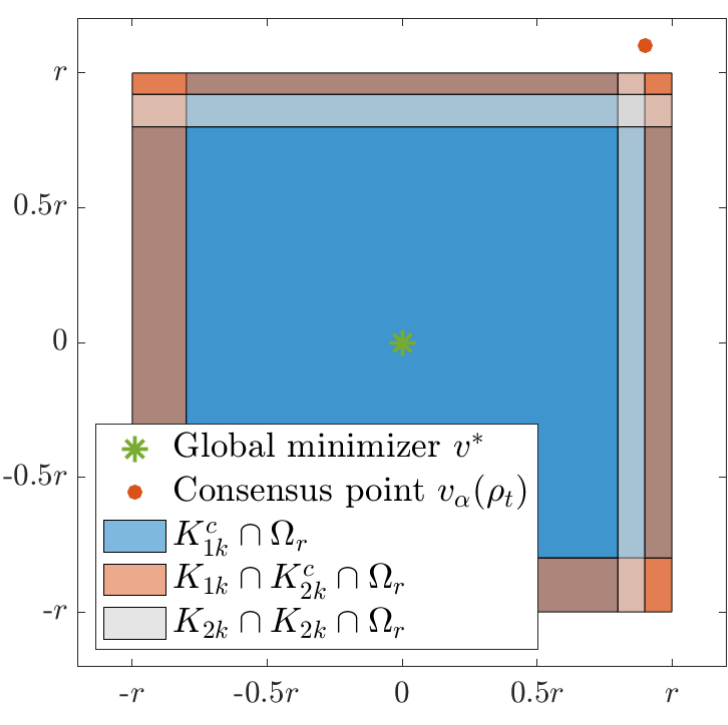}}
\caption{Visualization of the decomposition of $\Omega_r$ as in \eqref{eq:disjoint_sets} for different positions of $\conspoint{\rho_t}$ and values of $\sigma$.}
\label{fig:decomposition_support} \vspace{-1em}
\end{figure}

\noindent
In the following we treat each of these three subsets separately.

\noindent
\textbf{Subset $K_{1k}^c \cap \Omega_r$:}
We have $\abs{\left(v-v^*\right)_k} \leq \sqrt{c}r$ for each $v \in K_{1k}^c$, which can be used to independently derive lower bounds for both terms $T_{1k}$ and $T_{2k}$.
For $T_{1k}$, we insert the expression for $\partial_k \phi_{r}$ from Lemma~\ref{lem:properties_mollifier} to get
\begin{equation*}
\begin{aligned}
	T_{1k}(v)
	&= 2r^2\lambda \left(v-\conspoint{\rho_t}\right)_k \frac{\left(v-v^*\right)_k}{\left(r^2-\left(v-v^*\right)^2_k\right)^2}\phi_{r}(v)\\
	&\geq -2 r^2 \!\lambda \frac{\abs{\left(v-\conspoint{\rho_t}\right)_k}\!\abs{\left(v-v^*\right)_k}}{\left(r^2-\left(v-v^*\right)^2_k\right)^2}\phi_r(v)
	\geq - \frac{2\lambda(\sqrt{c}r+B)\sqrt{c}}{(1-c)^2r}\phi_r(v) \\
	&=: -p_1\phi_r(v),
\end{aligned}
\end{equation*}
where $\abs{(v-\conspoint{\rho_t})_k} \leq \abs{(v-v^*)_k}+\abs{(v^*-\conspoint{\rho_t})_k} \leq \sqrt{c}r+B$ is used in the last inequality.
For $T_2$ we insert the expression for $\partial^2_{kk} \phi_{r}$ from Lemma~\ref{lem:properties_mollifier} to obtain
\begin{equation*}
\begin{aligned}
	T_{2k}(v)
	&= \sigma^2r^2 \left(v\!-\!\conspoint{\rho_t}\right)_k^2 \frac{2\left(2\left(v\!-\!v^*\right)^2_k\!-\!r^2\right)\left(v\!-\!v^*\right)_k^2 \!-\! \left(r^2\!-\!\left(v\!-\!v^*\right)^2_k\right)^2}{\left(r^2\!-\!\left(v\!-\!v^*\right)^2_k\right)^4}\phi_{r}(v)\\
	&\geq - \frac{2\sigma^2(cr^2\!+\!B^2)(2c\!+\!1)}{(1\!-\!c)^4r^2}\phi_{r}(v) =: -p_2\phi_r(v),
\end{aligned}
\end{equation*}
where the last inequality uses $(v-\conspoint{\rho_t})_k^2 \leq \left(\sqrt{c}r+B\right)^2 \leq 2(cr^2+B^2)$.

\noindent
\textbf{Subset $K_{1k} \cap K_{2k}^c \cap \Omega_r$:}
As $v \in K_{1k}$ we have $\abs{\left(v-v^*\right)_k}_2 > \sqrt{c} r$.
We observe that $T_{1k}(v)+T_{2k}(v) \geq 0$ for all $v$ in this subset whenever
\begin{equation} \label{eq:aux_term_3}
\begin{aligned}
	&\left(-\lambda \left(v-\conspoint{\rho_t}\right)_k\left(v-v^*\right)_k + \frac{\sigma^2}{2} \left(v-\conspoint{\rho_t}\right)_k^2\right)\left(r^2-\left(v-v^*\right)^2_k\right)^2 \\
	&\qquad\qquad\qquad\qquad\qquad\, \leq \sigma^2 \left(v-\conspoint{\rho_t}\right)_k^2 \left(2\left(v-v^*\right)^2_k-r^2\right)\left(v-v^*\right)_k^2.
\end{aligned}
\end{equation}
The first term on the left-hand side in \eqref{eq:aux_term_3} can be bounded from above exploiting that $v\in K_{2k}^c$ and by using the relation $\tilde{c} = 2c-1$.
More precisely, we have
\begin{align*}
	&\!-\!\lambda\! \left(v\!-\!\conspoint{\rho_t}\right)_k\!\left(v\!-\!v^*\right)_k \!\left(r^2\!-\!\left(v\!-\!v^*\right)^2_k\right)^2\!\!
	\leq \tilde{c}r^2\frac{\sigma^2}{2}\!\left(v\!-\!\conspoint{\rho_t}\right)_k^2\!\left(v\!-\!v^*\right)_k^2\\
	&\ \ =\! (2c\!-\!1)r^2\frac{\sigma^2}{2}\!\left(v\!-\!\conspoint{\rho_t}\right)_k^2\!\left(v\!-\!v^*\right)_k^2\!
	\leq\! \left(2\left(v\!-\!v^*\right)_k^2\!-\!r^2\right)\!\frac{\sigma^2}{2}\!\left(v\!-\!\conspoint{\rho_t}\right)_k^2\!\left(v\!-\!v^*\right)_k^2\!,
\end{align*}
where the last inequality follows since $v\in K_{1k}$.
For the second term on the left-hand side in \eqref{eq:aux_term_3} we can use $(1-c)^2 \leq (2c-1)c$ as per assumption, to get
\begin{align*}
	&\frac{\sigma^2}{2} \left(v\!-\!\conspoint{\rho_t}\right)_k^2 \left(r^2\!-\!\left(v\!-\!v^*\right)_k^2\right)^2
	\leq \frac{\sigma^2}{2} \left(v\!-\!\conspoint{\rho_t}\right)_k^2 (1\!-\!c)^2r^4 \\
	&\,\quad\leq \frac{\sigma^2}{2}\! \left(v\!-\!\conspoint{\rho_t}\right)_k^2 (2c\!-\!1)r^2cr^2
	\leq \frac{\sigma^2}{2}\! \left(v\!-\!\conspoint{\rho_t}\right)_k^2 \left(2\left(v\!-\!v^*\right)_k^2\!-\!r^2\right)\left(v\!-\!v^*\right)_k^2.
\end{align*}
Hence, \eqref{eq:aux_term_3} holds and we have $T_{1k}(v) + T_{2k}(v) \geq 0$ uniformly on this subset.

\noindent
\textbf{Subset $K_{1k} \cap K_{2k} \cap \Omega_r$:}
As $v \in K_{1k}$ we have $\abs{\left(v-v^*\right)_k}_2 > \sqrt{c} r$.
We first note that $T_{1k}(v) = 0$ whenever $\sigma^2\left(v-\conspoint{\rho_t}\right)_k^2 = 0$, provided $\sigma>0$, so nothing needs to be done if $v_k = \left(\conspoint{\rho_t}\right)_k$.
Otherwise, if $\sigma^2\left(v-\conspoint{\rho_t}\right)_k^2 > 0$, we exploit $v\in K_{2k}$ to get
\begin{equation*}
\begin{split}
	\frac{\left(v-\conspoint{\rho_t}\right)_k\left(v-v^*\right)_k}{\left(r^2-\left(v-v^*\right)_k^2\right)^2}
	&\geq\frac{-\abs{\left(v-\conspoint{\rho_t}\right)_k}\abs{\left(v-v^*\right)_k}}{\left(r^2-\left(v-v^*\right)_k^2\right)^2}\\
	&> \frac{2\lambda\left(v-\conspoint{\rho_t}\right)_k \left(v-v^*\right)_k}{\tilde{c}r^2\sigma^2\abs{\left(v-\conspoint{\rho_t}\right)_k}\abs{\left(v-v^*\right)_k}}
	\geq -\frac{2\lambda}{\tilde{c}r^2\sigma^2}.
\end{split}
\end{equation*}
Using this, $T_{1k}$ can be bounded from below by
\begin{align*}
	T_{1k}(v) &= 2r^2\lambda \left(v-\conspoint{\rho_t}\right)_k \frac{\left(v-v^*\right)_k}{\left(r^2-\left(v-v^*\right)^2_k\right)^2}\phi_{r}(v) \geq -\frac{4\lambda^2}{\tilde{c}\sigma^2}\phi_{r}(v) =: -p_3\phi_r(v).
\end{align*}
For $T_{2k}$, the nonnegativity of $\sigma^2\left(v-\conspoint{\rho_t}\right)_k^2$ implies $T_{2k}(v) \geq 0$, whenever
\begin{equation*}
	2\left(2\left(v-v^*\right)^2_k-r^2\right)\left(v-v^*\right)_k^2 \geq \left(r^2-\left(v-v^*\right)^2_k\right)^2.
\end{equation*}
This holds for $v\in K_{1k}$, if $2(2c-1)c \geq (1-c)^2$ as implied by the assumption.

\noindent
\textbf{Concluding the proof:}
Using the evolution of $\phi_r$ as in \eqref{eq:initial_evolution} and the individual decompositions of $\Omega_r$ for the terms $T_{1k}+T_{2k}$, we now get
\begin{equation*}
\begin{split}
	&\frac{d}{dt}\int\!\phi_{r}(v)\,d\rho_t(v)
	= \sum_{k=1}^d\!\Bigg(\!\int_{K_{1k} \cap K_{2k}^c \cap \Omega_r}\!\underbrace{(T_{1k}(v) + T_{2k}(v))}_{\geq 0}\,d\rho_t(v)\\
	&\qquad+\! \int_{K_{1k} \cap K_{2k} \cap \Omega_r}\!\underbrace{(T_{1k}(v) + T_{2k}(v))}_{\geq -p_3\phi_{r}(v)}\,d\rho_t(v)
	+\!\int_{K_{1k}^c \cap \Omega_r}\!\underbrace{(T_{1k}(v) + T_{2k}(v))}_{\geq -(p_1+p_2)\phi_{r}(v)}d\rho_t(v)\!\Bigg) \\
	&\quad\geq -d\max\left\{p_1+p_2,p_3\right\} \int\phi_{r}(v)\,d\rho_t(v) = -p \int\phi_{r}(v)\,d\rho_t(v).
\end{split}
\end{equation*}
An application of Gr\"onwall's inequality concludes the proof. \hfill$\square$
\end{proof}


\subsection{Proof of Theorem~\ref{thm:global_convergence_main_anisotropic}} \label{subsec:proof_main}
We now have all necessary tools to conclude the global convergence proof.
\begin{proof}[Theorem~\ref{thm:global_convergence_main_anisotropic}]
	Let us first choose the parameter~$\alpha$ such that
	\begin{align} \label{eq:alpha}
	\begin{split}
		\alpha>
		\alpha_0
		:= \frac{1}{q_\varepsilon}\Bigg(\log\left(\frac{2^{d+1}\sqrt{2d\CV(\rho_0)}}{c\left(\tau,\lambda,\sigma\right)\sqrt{\varepsilon}}\right) 
		&+ \frac{p}{(1-\tau)\left(2\lambda-\sigma^2\right)}\log\left(\frac{\CV(\rho_0)}{\varepsilon}\right) \\
		&- \log\rho_0\big(B_{r_\varepsilon/2}^\infty(v^*)\big)\!\Bigg),
	\end{split}
	\end{align}
	where we introduce the definitions
	\begin{align}
		c\left(\tau,\lambda,\sigma\right)
		:= \min\left\{
			\frac{\tau}{2}\frac{\left(2\lambda-\sigma^2\right)}{\sqrt{2}\left(\lambda+\sigma^2\right)}, 
			\sqrt{\tau\frac{\left(2\lambda-\sigma^2\right)}{\sigma^2}}
			\right\} 
	\end{align}
	as well as
	\begin{align*} 
		q_\varepsilon \!:= \frac{1}{2}\min\bigg\{\!\left(\eta\frac{c\left(\tau,\lambda,\sigma\right)\sqrt{\varepsilon}}{2\sqrt{d}}\right)^{1/\nu}\!\!,\CE_{\infty}\!\bigg\}
		\text{ and }
		r_\varepsilon \!:=\!\max_{s \in [0,R_0]}\left\{\max_{v \in B_s^\infty(\globmin)}\CE(v) \leq q_\varepsilon\right\}\!.
	\end{align*}
	Moreover, $p$ is as defined in \eqref{eq:def_p_anisotropic} in Proposition~\ref{lem:lower_bound_probability_anisotropic} with $B=c(\tau,\lambda,\sigma)\sqrt{\CV(\rho_0)}$ and with $r=r_\varepsilon$.
	By construction, $q_\varepsilon>0$ and $r_\varepsilon\leq R_0$.
	Moreover, recalling the notation $\CE_{r}=\sup_{v \in B_{r}^\infty(\globmin)}\CE(v)$ from Proposition~\ref{prop:laplace_alt_anisotropic}, we have $q_\varepsilon+\CE_{r_\varepsilon} \leq 2q_\varepsilon \leq \CE_{\infty}$ by definition of~$r_\varepsilon$.
	Furthermore, since $q_\varepsilon>0$, the continuity of $\CE$ ensures that there exists $s_{q_\varepsilon}>0$ such that $\CE(v)\leq q_\varepsilon$ for all $v\in B_{s_{q_\varepsilon}}^\infty(\globmin)$, yielding also $r_\varepsilon>0$.
	
	Let us now define the time horizon $T_\alpha \geq 0$ by
	\begin{align} \label{eq:endtime_T_anisotropic}
		T_\alpha := \sup\big\{t\geq0 : \CV(\rho_{t'}) \!>\! \varepsilon \text{ and } \N{\conspoint{\rho_{t'}}-\globmin}_2 \!<\! C(t') \text{ for all } t' \!\in\! [0,t]\big\}
	\end{align}
	with $C(t):=c(\tau,\lambda,\sigma)\sqrt{\CV(\rho_t)}$.
	Notice for later use that $C(0)=B$.
	Our aim is to show that $\min_{t \in [0,T_\alpha]}\CV(\rho_t) \leq \varepsilon$ with $T_\alpha\leq T^*$ and that we have at least exponential decay until $\CV(\rho_t)$ reaches the prescribed accuracy~$\varepsilon$.
	
	First, however, let us ensure that $T_\alpha>0$, which follows from the continuity of the mappings~$t\mapsto\CV(\rho_{t})$ and~$t\mapsto\N{\conspoint{\rho_{t}}-\globmin}_2$ since $\CV(\rho_{0}) > \varepsilon$ and $\N{\conspoint{\rho_{0}}-\globmin}_2 < C(0)$.
	While the former holds by assumption, the latter follows by applying Proposition~\ref{prop:laplace_alt_anisotropic} with $q_\varepsilon$ and $r_\varepsilon$ as defined before, which gives
	\begin{align*} 
		\N{\conspoint{\rho_{0}} - \globmin}_2
		&\leq \frac{\sqrt{d}\left(q_\varepsilon+\CE_{r_\varepsilon}\right)^\nu}{\eta} + \frac{\sqrt{d}\exp\left(-\alpha q_\varepsilon\right)}{\rho_0(B_{r_\varepsilon}^\infty(v^*))}\int\N{v-\globmin}_2d\rho_{0}(v)\\
		&\leq \frac{c\left(\tau,\lambda,\sigma\right)\sqrt{\varepsilon}}{2} + \frac{\sqrt{d}\exp\left(-\alpha q_\varepsilon\right)}{\rho_0(B_{r_\varepsilon}^\infty(v^*))}\sqrt{2\CV(\rho_0)}\\
		&\leq c\left(\tau,\lambda,\sigma\right)\sqrt{\varepsilon}
		< c\left(\tau,\lambda,\sigma\right)\sqrt{\CV(\rho_0)} = C(0),
	\end{align*}
	where the first inequality in the last line holds by the choice of $\alpha$ in \eqref{eq:alpha}.
	
	Let us now show that $\CV(\rho_t)$ decays at least exponentially fast up to time $T_\alpha$.
	Lemma~\ref{lem:evolution_of_objective_anisotropic} provides an upper bound for the time derivative of $\CV(\rho_t)$, given by
	\begin{equation}
	\begin{aligned} \label{eq:main_aux_1_anisotropic}
		\frac{d}{dt}\CV(\rho_t) \leq\,
		& -\left(2\lambda-\sigma^2\right) \CV(\rho_t) + \sqrt{2}\left(\lambda+\sigma^2\right) \sqrt{\CV(\rho_t)} \N{\conspoint{\rho_t}-\globmin}_2 \\
		&+ \frac{\sigma^2}{2} \N{\conspoint{\rho_t}-\globmin}_2^2.
	\end{aligned}
	\end{equation}
	Combining this with the definition of $T_\alpha$ in \eqref{eq:endtime_T_anisotropic} we have by construction 
	\begin{align*}
		\frac{d}{dt}\CV(\rho_t)
		\leq -(1-\tau)\left(2\lambda-\sigma^2\right)\CV(\rho_t)
		\quad \text{ for all } t \in (0,T_\alpha).
	\end{align*}
	Gr\"onwall's inequality implies the upper bound 
	\begin{align} \label{eq:evolution_J_no_H}
		\CV(\rho_t)
		\leq \CV(\rho_0) \exp\left(- (1-\tau)\left(2\lambda-\sigma^2\right) t\right)
		\quad \text{ for all } t \in [0,T_\alpha].
	\end{align}
	Accordingly, we note that $\CV(\rho_t)$ is decreasing in $t$, which implies the decay of the function $C(t)$ as well. 
	Hence, recalling the definition of $T_\alpha$, we may bound
	\begin{align}
		&\max_{t \in [0,T_\alpha]}\N{\conspoint{\rho_t}-\globmin}_2
		\leq \max_{t \in [0,T_\alpha]} C(t)\leq C(0).
		\label{eq:max_bound_distance_anisotropic}
	\end{align}
	We now conclude by showing $\min_{t \in [0,T_\alpha]}\CV(\rho_t) \leq \varepsilon$ with $T_\alpha\leq T^*$.
	For this we distinguish the following three cases.
	
	\noindent
	\textbf{Case $T_\alpha \geq T^*$:}
	If $T_\alpha \geq T^*$, we can use the definition of $T^*$ in \eqref{eq:end_time_star_statement_no_H} and the time-evolution bound of $\CV(\rho_t)$ in \eqref{eq:evolution_J_no_H} to conclude that $\CV(\rho_{T^*}) \leq \varepsilon$.
	Hence, by definition of $T_\alpha$ in \eqref{eq:endtime_T_anisotropic}, we find $\CV(\rho_{T_\alpha}) =\varepsilon$ and $T_\alpha = T^*$.
	
	\noindent
	\textbf{Case $T_\alpha < T^*$ and $\CV(\rho_{T_\alpha}) \leq \varepsilon$:}
	Nothing needs to be discussed in this case.
	
	\noindent
	\textbf{Case $T_\alpha < T^*$ and $\CV(\rho_{T_\alpha}) > \varepsilon$:}
	We shall show that this case can never occur by verifying that $\N{\conspoint{\rho_{T_\alpha}}-\globmin}_2 < C(T_\alpha)$ due to the choice of $\alpha$ in~\eqref{eq:alpha}.
	Namely, by applying again Proposition~\ref{prop:laplace_alt_anisotropic} with $q_\varepsilon$ and $r_\varepsilon$ as defined before, we get
	\begin{align} \label{eq:proof_contradiction_1}
	\begin{split}
		\N{\conspoint{\rho_{T_\alpha}}-\globmin}_2
		&\leq \frac{\sqrt{d}\left(q_\varepsilon+\CE_{r_\varepsilon}\right)^\nu}{\eta} + \frac{\sqrt{d}\exp\left(-\alpha q_\varepsilon\right)}{\rho_{T_\alpha}\big(B_{r_\varepsilon}^\infty(v^*)\big)}\int\N{v-\globmin}_2d\rho_{T_\alpha}(v)\\
		&\leq \frac{c\left(\tau,\lambda,\sigma\right)\sqrt{\varepsilon}}{2} + \frac{\sqrt{d}\exp\left(-\alpha q_\varepsilon\right)}{\rho_{T_\alpha}\big(B_{r_\varepsilon}^\infty(v^*)\big)}\sqrt{2\CV(\rho_{T_\alpha})}\\
		&< \frac{c\left(\tau,\lambda,\sigma\right)\sqrt{\CV(\rho_{T_\alpha})}}{2} + \frac{\sqrt{d}\exp\left(-\alpha q_\varepsilon\right)}{\rho_{T_\alpha}\big(B_{r_\varepsilon}^\infty(v^*)\big)}\sqrt{2\CV(\rho_{T_\alpha})}.
	\end{split}
	\end{align}
	Since, thanks to \eqref{eq:max_bound_distance_anisotropic}, we have the bound $\max_{t \in [0,T_\alpha]}\Nnormal{\conspoint{\rho_t}-\globmin}_2 \leq B$ for $B=C(0)$, which is in particular independent of $\alpha$, Proposition~\ref{lem:lower_bound_probability_anisotropic} guarantees that there exists a $p>0$ independent of $\alpha$ (but dependent on $B$ and $r_\varepsilon$) with
	\begin{align*}
		\rho_{T_\alpha}(B_{r_\varepsilon}^\infty(v^*))
		&\geq \left(\int \phi_{r_\varepsilon}(v) \,d\rho_0(v)\right)\exp(-pT_\alpha) \\
		&\geq \frac{1}{2^d}\,\rho_0\big(B_{r_\varepsilon/2}^\infty(v^*)\big) \exp(-pT^*) 
		> 0,
	\end{align*}
	where we used \eqref{eq:condition_initial_measure} for bounding the initial mass $\rho_0$ and the fact that $\phi_{r}$ (as defined in Equation~\eqref{eq:mollifier}) is bounded from below on $B_{r/2}^\infty(\globmin)$ by $1/2^d$.
	With this we can continue the chain of inequalities in~\eqref{eq:proof_contradiction_1} to obtain
	\begin{align*} 
	\begin{split}
		\N{\conspoint{\rho_{T_\alpha}}-\globmin}_2
		&< \frac{c\left(\tau,\lambda,\sigma\right)\sqrt{\CV(\rho_{T_\alpha})}}{2} + \frac{2^d\sqrt{d}\exp\left(-\alpha q_\varepsilon\right)}{\rho_0\big(B_{r_\varepsilon/2}^\infty(v^*)\big) \exp(-pT^*)}\sqrt{2\CV(\rho_{T_\alpha})}\\
		&\leq c\left(\tau,\lambda,\sigma\right)\sqrt{\CV(\rho_{T_\alpha})}
		= C(T_\alpha),
	\end{split}
	\end{align*}
	where the first inequality in the last line holds by the choice of $\alpha$ in \eqref{eq:alpha}.
	This establishes the desired contradiction, again as consequence of the continuity of the mappings~$t\mapsto\CV(\rho_{t})$ and~$t\mapsto\N{\conspoint{\rho_{t}}-\globmin}_2$. \hfill$\square$
\end{proof}

\section{A Machine Learning Example} \label{sec:numerics}

In this section, we showcase the practicability of the implementation of anisotropic CBO as described in~\cite[Algorithm~2.1]{carrillo2019consensus} for problems appearing in machine learning by training a shallow and a convolutional NN~(CNN) classifier for the MNIST dataset of handwritten digits~\cite{MNIST}.
Let us emphasize that it is not our aim to challenge the state of the art for this task by employing the most sophisticated model or intricate data preprocessing.
We merely believe that this is a well-understood, complex, high-dimensional benchmark to demonstrate that CBO achieves good results already with limited computational capacities.

Let us now describe the NN architectures used in our numerical experiment, see also Figure~\ref{fig:architectures}. 
\begin{figure}[!ht]
\centering
\subcaptionbox{\label{fig:shallowNN} Shallow NN with one dense layer}{\vspace{1.01em}\includegraphics[width=0.24\textwidth, trim=0 0 0 0,clip]{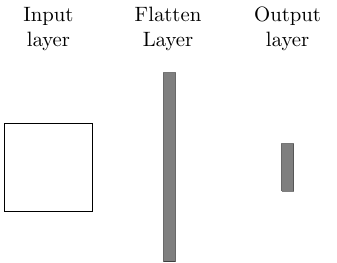}}
\hspace{2em}
\subcaptionbox{\label{fig:CNN} Convolutional NN~(LeNet-1) with two convolutional and two pooling layers, and one dense layer}{\includegraphics[width=0.654\textwidth, trim=0 0 0 0,clip]{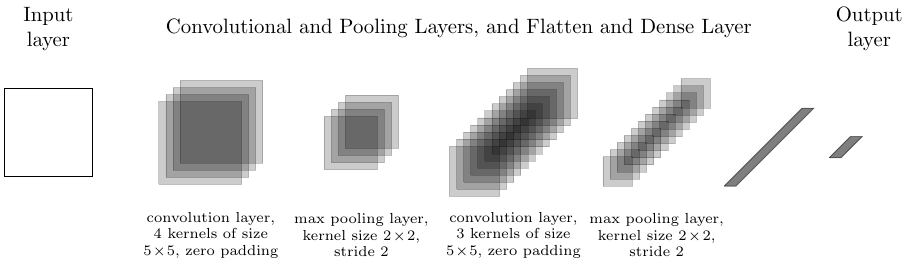}}
\caption{Architectures of the NNs used in the experiments of Section~\ref{sec:numerics}.} \vspace{-1em}
\label{fig:architectures}
\end{figure}
%
Each input image is represented by a matrix of dimension $28\times28$ with entries valued between $0$ and $1$ depending on the grayscale of the respective pixel.
For the shallow neural net (see Figure~\ref{fig:shallowNN}) the image is first reshaped to a vector~$x\in\bbR^{728}$ before being passed through a dense layer of the form~$\relu(Wx+b)$ with trainable weight matrix~$W\in\bbR^{10\times728}$ and bias vector~$b\in\bbR^{10}$.
The CNN (see Figure~\ref{fig:CNN}) has learnable kernels and its architecture is similar to the one of the LeNet-1, cf.\@~\cite[Section~III.C.7]{lecun1998gradient}.
In both networks a batch normalization step is included after each $\relu$ activation, which entails a considerably faster training process.
Moreover, in the final layers a softmax activation function is applied so that the output can be interpreted as a probability distribution over the digits.
In total, the number of unknowns to be trained in case of the shallow NN is~$7850$, which compares to~$2112$ free parameters for the CNN.
We denote the parameters of the NN by~$\theta$ and its forward pass by $f_\theta$.

As a loss function during training we use the categorical crossentropy loss $\ell(\widehat{y},y)=-\sum_{k=0}^9 y_k \log \left(\widehat{y}_k\right)$ with $\widehat{y}=f_\theta(x)$ denoting the output of the NN for a training sample~$(x,y)$ consisting of image and label.
This gives rise to the objective function~$\CE(\theta) = \frac{1}{M} \sum_{m=1}^M \ell(f_\theta(x^m),y^m)$, where $(x^m,y^m)_{m=1}^M$ denote the $M$ training samples.
When evaluating the performance of the NN we determine the accuracy on a test set by counting the number of successful predictions.

The used implementation of anisotropic CBO combines ideas presented in~\cite[Section~2.2]{fornasier2020consensus_sphere_convergence} with the algorithm proposed in~\cite{carrillo2019consensus}.
More precisely, it employs random mini-batch ideas when evaluating the objective function~$\CE$ and when computing the consensus point~$\conspointnoarg$, meaning that $\CE$ is only evaluated on a random subset of size~$n_\CE$ of the training dataset and $\conspointnoarg$ is only computed from a random subset of size~$n_N$ of all particles.
While this reduces the computational complexity, it simultaneously increases the stochasticity, which enhances the ability to escape from local optima.
Furthermore, inspired by Simulated Annealing, a cooling strategy for the parameters~$\alpha$ and $\sigma$ is used as well as a variance-based particle reduction technique similar to ideas from Genetic Algorithms.
More specifically, $\alpha$ is multiplied by $2$ after each epoch, while the diffusion parameter~$\sigma$ follows the schedule $\sigma_{epoch} = \sigma_{0}/\log_2(epoch+2)$.
For our experiments we choose the parameters $\lambda=1$, $\sigma_0=\sqrt{0.4}$ and $\alpha_{initial}=50$, and  discrete time step size $\Delta t=0.1$ for training both the shallow and the convolutional NN.
We use $N=100$ particles, which are initialized according to $\CN\big((0,\dots,0)^T,\Id\big)$.
The mini-batch sizes are $n_\CE=60$ and $n_N=10$ and despite $\conspointnoarg$ being computed only on a basis of $n_N$ particles, all $N$ particles are updated in every time step, referred to as the full update in~\cite{carrillo2019consensus}.
We emphasize that hyperparameters have not been tuned extensively.

In Figure~\ref{fig:MNISTresults} we report the results of our experiment.
\begin{figure}[!ht]
	\centering
	\includegraphics[height=4.2cm, trim=28 248 8 266,clip]{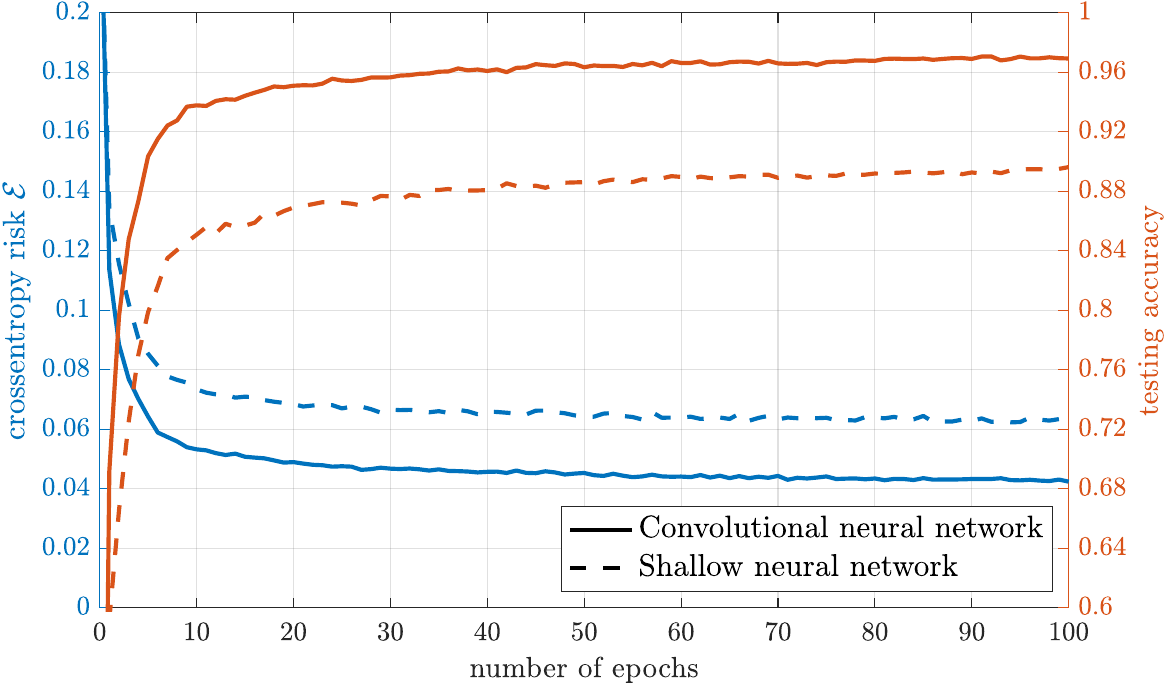}
\caption{Comparison of the performances of a shallow (dashed lines) and convolutional (solid lines) NN with architectures as described in Figures~\ref{fig:shallowNN} and~\ref{fig:CNN}, when trained with a discretized version of the anisotropic CBO dynamics~\eqref{eq:dyn_micro}. Depicted are the accuracies on a test dataset~(orange lines) and the values of the objective function~$\CE$~(blue lines), which was chosen to be the categorical crossentropy loss on a random sample of the training set of size $10000$.} \vspace{-1em}
\label{fig:MNISTresults}
\end{figure}
While achieving a test accuracy of almost $90\%$ for the shallow NN, we obtain around $97\%$ accuracy with the CNN.
For comparison, when trained with backpropagation with finely tuned parameters, a comparable CNN achieves $98.3\%$ accuracy, cf.\@~\cite[Figure~9]{lecun1998gradient}.
In view of these results, CBO can be regarded as a successful optimizer for machine learning tasks, which performs comparably to the state of the art.
At the same time it is worth highlighting that CBO is extremely versatile and customizable, does not require gradient information or substantial hyperparameter tuning and has the potential to be parallelized.

\section{Conclusion} \label{sec:conclusion}
In this paper we establish the global convergence of anisotropic consensus-based optimization (CBO) to the global minimizer in mean-field law with dimension-independent convergence rate by adapting the proof technique developed in~\cite{fornasier2021consensus}.
It is based on the insight that the dynamics of individual particles follow, on average, the gradient flow dynamics of the map $v\mapsto\N{v-v^*}^2_2$.
Furthermore, by utilizing the implementation of anisotropic CBO suggested in~\cite{carrillo2019consensus}, we demonstrate the practicability of the method by training the well-known LeNet-1 on the MNIST data set, achieving around $97$\% accuracy after few epochs with just 100 particles.

In subsequent work we plan to extend our theoretical understanding of CBO to the finite particle regime, and aim to provide extensive numerical studies.
We also intend to use this approach to explain the mean-field law convergence behavior of other metaheuristics such as Particle Swarm Optimization, see, e.g., \cite{cipriani2021zeroinertia,qiu2022globalPSOconvergence}.

\subsubsection{Acknowledgements.}
MF acknowledges the support of the DFG Project “Identification of Energies from Observations of Evolutions” and the DFG SPP 1962 “Non-smooth and Complementarity-Based Distributed Parameter Systems: Simulation and Hierarchical Optimization”.
KR acknowledges the financial support from the Technical University of Munich -- Institute for Ethics in Artificial Intelligence (IEAI).

\bibliographystyle{splncs04}
\bibliography{biblio.bib}

\end{document}